\newcommand{\beq}{\begin{equation}}
\newcommand{\eeq}{\end{equation}}
\newcommand{\beqs}{\begin{equation*}}
\newcommand{\eeqs}{\end{equation*}}
\newcommand{\bb}{{\bf b}}
\newcommand{\bx}{{\bf x}}
\newcommand{\by}{{\bf y}}
\newcommand{\bd}{{\bf d}}
\newcommand{\bv}{{\bf v}}
\newcommand{\bff}{{\bf f}}
\newcommand{\bg}{{\bf g}}
\newcommand{\bu}{{\bf u}}
\newcommand{\bz}{{\bf z}}
\newcommand{\bp}{{\bf p}}
\newcommand{\br}{{\bf r}}
\newcommand{\bw}{{\bf w}}
\newcommand{\Div}{\nabla\cdot}
\newcommand{\BC}{\begin{center}}
\newcommand{\EC}{\end{center}}
\newcommand{\bdm}{\begin{displaymath}}
\newcommand{\edm}{\end{displaymath}}
\newcommand{\CF}{{\cal F}}
\newcommand{\dia}[1]{\text{diag}(#1)}
\newcommand{\argmin}{\mathop{\mbox{argmin}}}
\newcommand{\barr}{\begin{array}}
\newcommand{\earr}{\end{array}}
\newcommand{\beqas}{\begin{eqnarray*}}
\newcommand{\eeqas}{\end{eqnarray*}}
\title{Steepest Descent Preconditioning for Nonlinear GMRES Optimization}
\author{
H. De Sterck\footnotemark[1] \footnotemark[4] 
}
\begin{document}
%%%%%%%%%%%%%%%%%%%%%%%%%%%%%%%%%%%%%%%%%%%%%%%%%%%%%%%%%%
%%%%%%%%%%%%%%%%%%%%%%%%%%%%%%%%%%%%%%%%%%%%%%%%%%%%%%%%%%
%%%%%%%%%%%%%%%%%%%%%%%%%%%%%%%%%%%%%%%%%%%%%%%%%%%%%%%%%%
%\psdraft
%
%\centerline{\bf *** draft revision, \today ***}
%\centerline{\sc \tiny \bf preprint}
%\centerline{\sc \tiny \bf Submitted to , arXiv:1106.4426.}
%\centerline{\sc \tiny \bf Submitted to .}
%\centerline{\sc \tiny \bf submitted, , arXiv:1106.4426}
%\centerline{\sc \tiny \bf submitted, }
\maketitle
\renewcommand{\thefootnote}{\fnsymbol{footnote}}
\footnotetext[1]{Department of Applied Mathematics, University of Waterloo,
Waterloo, Ontario, Canada}
\footnotetext[4]{hdesterck@uwaterloo.ca}
%\centerline{\today \quad (*** draft, not for circulation ***)}
%\centerline{\today}
%\renewcommand{\thefootnote}{\arabic{footnote}}
%%%%%%%%%%%%%%%%%%%%%%%%%%%%%%%%%%%%%%%%%%%%%%%%%%%%%%%%%%
%%%%%%%%%%%%%%%%%%%%%%%%%%%%%%%%%%%%%%%%%%%%%%%%%%%%%%%%%%
\begin{abstract}
Steepest descent preconditioning is considered for the recently proposed nonlinear generalized minimal
residual (N-GMRES) optimization algorithm for unconstrained nonlinear optimization. 
Two steepest descent preconditioning variants are proposed.
The first employs a line search, while the second employs a predefined small step.
A simple global convergence proof is provided for the N-GMRES optimization algorithm with the first steepest descent
preconditioner (with line search), under mild standard conditions on the objective function and the line
search processes. Steepest descent preconditioning for N-GMRES optimization is also motivated by relating it to
standard non-preconditioned GMRES for linear systems in the case of a standard quadratic optimization
problem with symmetric positive definite operator.
Numerical tests on a variety of model problems show that the N-GMRES optimization algorithm is able to very
significantly accelerate convergence of stand-alone steepest descent optimization.
Moreover, performance of steepest-descent
preconditioned N-GMRES is shown to be competitive with standard nonlinear
conjugate gradient and limited-memory Broyden-Fletcher-Goldfarb-Shanno methods for the model problems considered.
These results serve to theoretically and numerically establish steepest-descent preconditioned N-GMRES as a general optimization method for unconstrained nonlinear optimization, with performance that appears
promising compared to established techniques.
In addition, it is argued that the real potential of the N-GMRES optimization framework lies in the fact
that it can make use of problem-dependent nonlinear preconditioners that are more powerful than steepest descent
(or, equivalently, N-GMRES can be used as a simple wrapper around any other iterative optimization process to seek acceleration of that process), and this potential is illustrated with a further application example.
\end{abstract}
%%%%%%%%%%%%%%%%%%%%%%%%%%%%%%%%%%%%%%%%%%%%%%%%%%%%%%%%%%
\begin{keywords} nonlinear optimization, GMRES, steepest descent
\end{keywords}
\begin{AMS} 65K10 Optimization, 65F08 Preconditioners for iterative methods, 65F10 Iterative methods
\end{AMS}
\pagestyle{myheadings}
\thispagestyle{plain}
\markboth{H. De Sterck
}{Steepest Descent Preconditioning for N-GMRES Optimization}
%%%%%%%%%%%%%%%%%%%%%%%%%%%%%%%%%%%%%%%%%%%%%%%%%%%%%%%%%%
% sections
%%%%%%%%%%%%%%%%%%%%%%%%%%%%%%%%%%%%%%%%%%%%%%%%%%%%%%%%%%

%%%%%%%%%%%%%%%%%%%%%%%%%%%%%%%%%%%%%%%%%%%%%%%%%%%%%%%%%%
%%%%%%%%%%%%%%%%%%%%%%%%%%%%%%%%%%%%%%%%%%%%%%%%%%%%%%%%%%
\section{Introduction}
%%%%%%%%%%%%%%%%%%%%%%%%%%%%%%%%%%%%%%%%%%%%%%%%%%%%%%%%%%
%%%%%%%%%%%%%%%%%%%%%%%%%%%%%%%%%%%%%%%%%%%%%%%%%%%%%%%%%%
In recent work on canonical tensor approximation \cite{NGMRES}, we have proposed an algorithm that accelerates convergence of the alternating least squares (ALS) optimization method for the canonical tensor approximation problem considered there. The algorithm proceeds by linearly recombining previous iterates in a way that approximately minimizes the residual (the gradient of the objective function), using a nonlinear generalized minimal residual (GMRES) approach. The recombination step is followed by a line search step for globalization, and the resulting three-step non-linear GMRES (N-GMRES) optimization algorithm is shown in \cite{NGMRES} to significantly speed up the convergence of ALS for the canonical tensor approximation problem considered.

As explained in \cite{NGMRES} (which we refer to as Paper I in what follows), for the tensor approximation problem considered there, ALS can also be interpreted as a preconditioner for the N-GMRES optimization algorithm.
The question then arises what other types of preconditioners can be considered for the N-GMRES optimization algorithm proposed in Paper I, and whether there are universal preconditioning approaches that can make the N-GMRES optimization algorithm applicable to nonlinear optimization problems more generally.
In the present paper, we propose such a universal preconditioning approach for the N-GMRES optimization algorithm proposed in Paper I, namely, steepest descent preconditioning. 
We explain how updates in the steepest descent direction can indeed naturally be used as a preconditioning process for the N-GMRES optimization algorithm.
%, thus generalizing the applicability of the N-GMRES optimization method proposed in Paper I to a broad class of smooth unconstrained nonlinear optimization problems. 
In fact, we show that steepest descent preconditioning can be seen as the most basic preconditioning process for the N-GMRES optmization method, in the sense that applying N-GMRES to a quadratic objective function with symmetric positive definite (SPD) operator, corresponds mathematically to applying standard non-preconditioned GMRES for linear systems to the linear system corresponding to the quadratic objective function.
We propose two variants of steepest descent preconditioning, one with line search and one with a predefined small step. We 
give a simple global convergence proof for the N-GMRES optimization algorithm with our first proposed variant of steepest descent preconditioning (with line search), under standard mild conditions on the objective function and for line searches satisfying the Wolfe conditions. The second preconditioning approach, without line search, is of interest because it is more efficient in numerical tests, but there is no convergence guarantee. Numerical results are employed for a variety of test problems demonstrating that N-GMRES optimization can significantly speed up stand-alone steepest descent optimization. We also compare steepest-descent preconditioned N-GMRES with a standard nonlinear conjugate gradient (N-CG) method for all our test problems, and with a standard limited-memory Broyden-Fletcher-Goldfarb-Shanno (L-BFGS) method.
%Note that, among established methods for nonlinear optimization, we choose to compare with N-CG because it is, like our N-GMRES algorithm, a generalization to nonlinear optimization of a Krylov method for linear equations. %Our results will show that steepest-descent preconditioned N-GMRES is in many cases competitive with N-CG.

We consider the following unconstrained nonlinear optimization problem with associated first-order optimality equations:\\

%\vspace{.1cm}
\noindent
{\sc optimization problem I:}
%------------------------------------------------------------------
\begin{align}
\text{find $\bu^*$ that minimizes }f(\bu).
\label{eq:fu}
\end{align}
%------------------------------------------------------------------

\noindent
{\sc first-order optimality equations I:}
%------------------------------------------------------------------
\begin{align}
\nabla f(\bu)=\bg(\bu)=0.
\label{eq:gu}
\end{align}
%------------------------------------------------------------------

The N-GMRES optimization algorithm proposed in Paper I for accelerating ALS for canonical tensor approximation consists of three steps that can be summarized as follows. (Fig.~\ref{fig:N-GMRES} gives a schematic representation of the algorithm, and it is described in pseudo-code in Algorithm \ref{alg:N-GMRES}.)
In the first step, a preliminary new iterate $\bar{\bu}_{i+1}$ is generated from the last iterate $\bu_i$ using a one-step iterative update process $M(.)$, which can be interpreted as a preconditioning process (see Paper I and below). ALS preconditioning is used for $M(.)$ in Paper I.
In the second step, an accelerated iterate $\hat{\bu}_{i+1}$ is obtained by linearly recombining previous iterates in a window of size $w$, $(\bu_{i-w+1},\ldots,\bu_{i})$, using a nonlinear GMRES approach. (The details of this step will be recalled in Section \ref{sec:Steepest} below.)
In the third step, a line search is performed that minimizes objective function $\bff(\bu)$ on a half line starting at preliminary iterate $\bar{\bu}_{i+1}$, which was generated in Step I, and connecting it with accelerated iterate $\hat{\bu}_{i+1}$, which was generated in Step II, to obtain the new iterate $\bu_{i+1}$.

The second step in the N-GMRES optimization algorithm (Step II in Algorithm \ref{alg:N-GMRES}) uses the nonlinear extension of GMRES for solving nonlinear systems of equations that was proposed by Washio and Oosterlee in \cite{WashioNGMRES-ETNA} in the context of nonlinear partial differential equation (PDE) systems (see also \cite{OosterleeNGMRES-SISC} and \cite{WashioNGMRES-ETNA} for further applications to PDE systems). It is a nonlinear extension of the celebrated GMRES method for iteratively solving systems of linear equations \cite{SaadGMRES,SaadBook}. Washio and Oosterlee's nonlinear extension is related to Flexible GMRES as described in \cite{SaadFlexible}, and is also related to the reduced rank extrapolation method \cite{RRE}. An early description of this type of nonlinear iterate acceleration ideas for solving nonlinear equation systems appears in so-called Anderson mixing, see, e.g., \cite{SaadAnderson,Walker}. More recent applications of these ideas to nonlinear equation systems and fixed-point problems are discussed in \cite{SaadAnderson,Walker}. 
In Paper I we formulated a nonlinear GMRES optimization algorithm for canonical tensor decomposition that uses this type of acceleration as one of its steps, combined with an ALS preconditioning step and a line search for globalization. 
The type of nonlinear iterate acceleration in Step II of Algorithm \ref{alg:N-GMRES} has thus been considered several times before in the context of solving nonlinear systems of equations, but we believe that its combination with a line search to obtain a general preconditioned nonlinear optimization method as in Algorithm \ref{alg:N-GMRES} (see Paper I) is new in the optimization context.
In the present paper we show how this N-GMRES optimization approach can be applied to a broad class of sufficiently smooth nonlinear optimization problems by using steepest descent preconditioning. We establish theoretical convergence properties for this approach and demonstrate its effectiveness in numerical tests.\\

%%%%%%%%%%%%%%%%%%%%%%%%%%%%%%%%%%%%%%%%%%%%%%%%%
%%%%%%%%%%%%%%%%%%%%%%%%%%%%%%%%%%%%%%%%%%%%%%%%%
%\linesnumberedhidden
\begin{algorithm}[H]
\dontprintsemicolon

{\bf Input:} $w$ initial iterates $\bu_0, \ldots,\bu_{w-1}$.\;
\ \\
$i=w-1$\;
\Repeat{\text{convergence criterion satisfied}}{
{\sc Step I:} {\em (generate preliminary iterate by one-step update process $M(.)$)}\;
$\qquad \bar{\bu}_{i+1}=M(\bu_{i})$\;
{\sc Step II:}  {\em (generate accelerated iterate by nonlinear GMRES step)}\;
$\qquad \hat{\bu}_{i+1}=$gmres$(\bu_{i-w+1},\ldots,\bu_{i};\bar{\bu}_{i+1})$\;
{\sc Step III:}  {\em (generate new iterate by line search process)}\;
\qquad {\bf if } $\hat{\bu}_{i+1}-\bar{\bu}_{i+1}$ {\em is a descent direction}\;
\qquad \qquad $\bu_{i+1}=$linesearch$(\bar{\bu}_{i+1}+\beta(\hat{\bu}_{i+1}-\bar{\bu}_{i+1}))$\;
\qquad {\bf else}\;
\qquad \qquad $\bu_{i+1}=\bar{\bu}_{i+1}$\;
\qquad {\bf end}\;
$i=i+1$\;
}\;
\ \\
\caption{N-GMRES optimization algorithm (window size $w$)}
\label{alg:N-GMRES}
\end{algorithm}
%%%%%%%%%%%%%%%%%%%%%%%%%%%%%%%%%%%%%%%%%%%%%%%%%
%%%%%%%%%%%%%%%%%%%%%%%%%%%%%%%%%%%%%%%%%%%%%%%%%
(Note that the $w$ initial iterates required in Algorithm \ref{alg:N-GMRES} can naturally be generated by applying the algorithm with a window size that gradually increases from one up to $w$, starting from a single initial guess. Also, as in \cite{NGMRES}, we perform a restart and reset the window size back to 1 whenever $\hat{\bu}_{i+1}-\bar{\bu}_{i+1}$ is not a descent direction.)\\

%---------------------------------------------------------------------------------------------------------------------------------
\begin{figure}[!htbp]
  \centering
  \scalebox{1.2}{
    \includegraphics{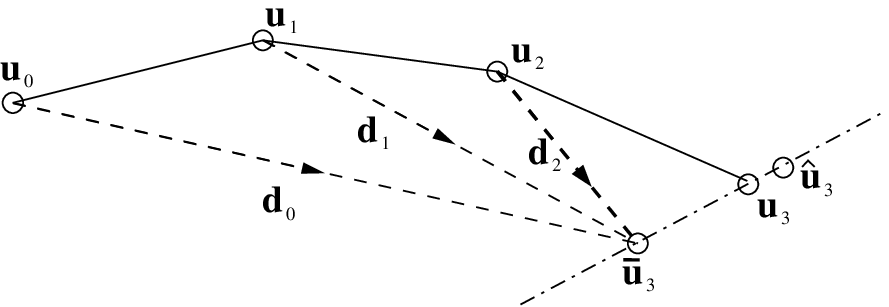}
  }
   \caption{Schematic representation of one iteration of the N-GMRES optimization algorithm (from \cite{NGMRES}). Given previous iterations $\bu_0$, $\bu_1$ and $\bu_2$, new iterate $\bu_3$ is generated as follows. In Step I, preliminary iterate $\bar{\bu}_3$ is generated by the one-step update process $M(.)$: $\bar{\bu}_3=M(\bu_2)$. In Step II, the nonlinear GMRES step, accelerated iterate $\hat{\bu}_3$ is obtained by determining the coefficients $\alpha_j$ in $\hat{\bu}_3=\bar{\bu}_3+\alpha_0 \bd_0+\alpha_1 \bd_1+\alpha_2 \bd_2$ such that the gradient of the objective function in $\hat{\bu}_3$ is approximately minimized. In Step III, the new iterate, $\bu_3$, is finally generated by a line search that minimizes the objective function $f(\bar{\bu}_{3}+\beta(\hat{\bu}_{3}-\bar{\bu}_{3}))$.}
   \label{fig:N-GMRES}
\end{figure}    
%---------------------------------------------------------------------------------------------------------------------------------

The rest of this paper is structured as follows. In Section \ref{sec:Steepest} we propose two types of steepest descent preconditioners for N-GMRES Optimization Algorithm \ref{alg:N-GMRES}.
We briefly recall the details of the nonlinear GMRES optimization step, give a motivation and interpretation for steepest descent preconditioning that relate it to non-preconditioned GMRES for SPD linear systems, and give a simple proof for global convergence of the N-GMRES optimization algorithm using steepest descent preconditioning with line search.
In Section \ref{sec:numerics} we present extensive numerical results for N-GMRES optimization with the two proposed steepest descent preconditioners, applied to a variety of nonlinear optimization problems, and compare with stand-alone steepest descent, N-CG and L-BFGS.
Finally, Section \ref{sec:conc} concludes.

%%%%%%%%%%%%%%%%%%%%%%%%%%%%%%%%%%%%%%%%%%%%%%%%%%%%%%%%%%
%%%%%%%%%%%%%%%%%%%%%%%%%%%%%%%%%%%%%%%%%%%%%%%%%%%%%%%%%%
\section{Steepest Descent Preconditioning for N-GMRES Optimization}
\label{sec:Steepest}
%%%%%%%%%%%%%%%%%%%%%%%%%%%%%%%%%%%%%%%%%%%%%%%%%%%%%%%%%%
%%%%%%%%%%%%%%%%%%%%%%%%%%%%%%%%%%%%%%%%%%%%%%%%%%%%%%%%%%
In this section, we first propose two variants of steepest descent preconditioning. We then briefly recall the details of the nonlinear GMRES recombination step (Step II in Algorithm \ref{alg:N-GMRES}), and relate N-GMRES optimization to
standard non-preconditioned GMRES for linear systems in the case of a simple quadratic optimization
problem with SPD operator. Finally, we give a simple global convergence proof for the N-GMRES optimization algorithm using steepest descent preconditioning with line search.

%*****************************************************************
\subsection{Steepest Descent Preconditioning Process}
\label{subsec:SteepestProcess}
%*****************************************************************
We propose a general steepest descent preconditioning process for Step I of N-GMRES Optimization Algorithm 
\ref{alg:N-GMRES} with the following two variants:\\

\noindent
{\sc Steepest Descent Preconditioning Process:}
%------------------------------------------------------------------
%\begin{align}
\begin{alignat}{3}
&\quad \bar{\bu}_{i+1}=\bu_i-\beta \, \frac{\nabla f(\bu_i)}{\|\nabla f(\bu_i)\|} &&\quad \text{with} \nonumber \\
%\label{eq:steepest}
%\end{align}
%------------------------------------------------------------------
%------------------------------------------------------------------
%\begin{alignat}{3}
&\text{\sc option A:}&\beta&=\beta_{sdls}, \label{eq:steepestA}\\
&\text{\sc option B:}&\beta&=\beta_{sd}=\min(\, \delta \, , \, \|\nabla f(\bu_i)\| \, ).
\label{eq:steepestB}
\end{alignat}
%------------------------------------------------------------------
For Option A, $\beta_{sdls}$ is the step length obtained by a line search procedure. For definiteness, we consider a line search procedure that satisfies the Wolfe conditions (see below). We refer to the steepest descent preconditioning process with line search (\ref{eq:steepestA}) as the {\em sdls} preconditioner.
For Option B, we predefine the step $\beta_{sd}$ as the minimum of a small positive constant $\delta$, and the norm of the gradient. In the numerical results to be presented further on in the paper, we use $\delta=10^{-4}$, except where noted. We refer to the steepest descent preconditioning process with predefined step $\beta_{sd}$ (\ref{eq:steepestB}) as the {\em sd} preconditioner. These two Options are quite different, and some discussion is in order.

Preconditioning process A can be employed as a stand-alone optimization method (it can converge by itself), and N-GMRES can be considered as a wrapper that accelerates this stand-alone process. We will show below that N-GMRES with preconditioning process A has strong convergence properties, but it may be expensive because the line search may require a significant number of function and gradient ($f/g$) evaluations.
However, the situation is very different for preconditioning process B. Here, no additional $f/g$ evaluations are required, but convergence appears questionable. It is clear that preconditioning process B cannot be used as a stand-alone optimization algorithm; in most cases it would not converge. It can, however, still be used as a preconditioning process for N-GMRES. As is well-known and will be further illustrated below, preconditioners used by GMRES for linear systems do not need to be convergent by themselves, and this suggests that it may be interesting to consider this for N-GMRES optimization as well. As will be motivated further below, the role of the N-GMRES preconditioning process is to provide new `useful' directions for the nonlinear generalization of the Krylov space, and the iteration can be driven to convergence by the N-GMRES minimization, even if the preconditioner is not convergent by itself. However, for this to happen in the three-step N-GMRES optimization algorithm with preconditioning process B, it is required that $\bar{\bu}_{i+1}$ eventually approaches $\bu_i$ and the step length $\beta_{sd}$ approaches 0. For this reason, we select $\beta_{sd}= \|\nabla f(\bu_i)\|$ as soon as $\|\nabla f(\bu_i)\| \le \delta$. The initial step length $\beta_{sd}$ is chosen to be not larger than a small constant because the linear case (see below) suggests that a small step is sufficient to provide a new direction for the Krylov space, and because the minimization of the residual is based on a linearization argument (see also below), and small steps tend to lead to small linearization errors.

%*****************************************************************
\subsection{N-GMRES Recombination Step}
\label{subsec:N-GMRESStep}
%*****************************************************************
Before relating steepest-descent preconditioned N-GMRES to non-preconditioned GMRES for linear systems, we first recall from \cite{NGMRES} some details of the N-GMRES recombination step, Step II in Algorithm \ref{alg:N-GMRES}.
In this step, we find an accelerated iterate $\hat{\bu}_{i+1}$ that is obtained by recombining previous iterates as follows:
%------------------------------------------------------------------
\begin{align}
\hat{\bu}_{i+1}=\bar{\bu}_{i+1}+\sum_{j=0}^{i} \, \alpha_j \, (\bar{\bu}_{i+1}-\bu_j).
\label{eq:accel}
\end{align}
%------------------------------------------------------------------
The unknown coefficients $\alpha_j$ are determined by the N-GMRES algorithm in such a way that the two-norm of the gradient of the objective function evaluated at the accelerated iterate is small. In general, $\bg(.)$ is a nonlinear function of the $\alpha_j$, and linearization is used to allow for inexpensive computation of coefficients $\alpha_j$ that may approximately minimize $\|\bg(\hat{\bu}_{i+1})\|_2$. Using the following approximations
%------------------------------------------------------------------
\begin{align}
\bg(\hat{\bu}_{i+1})&\approx \bg(\bar{\bu}_{i+1})+\sum_{j=0}^{i} \, \left. \frac{\partial \bg}{\partial \bu} \right|_{\bar{\bu}_{i+1}} \, \alpha_j \, (\bar{\bu}_{i+1}-\bu_{j}) \nonumber\\
  & \approx \bg(\bar{\bu}_{i+1})+\sum_{j=0}^{i} \, \alpha_j \, (\bg(\bar{\bu}_{i+1})-\bg(\bu_{j}))
  \label{eq:linearize}
\end{align}
%------------------------------------------------------------------
one arrives at minimization problem
%\centerline{{find $(\alpha_0, \ldots, \alpha_i)$ that minimize }}
%------------------------------------------------------------------
%\begin{align}
% \| \bg(\bu_{i+1})+\sum_{j=0}^{i} \, \alpha_j \, (\bg(\bu_{i+1})-\bg(\bu_{j})) \|_2.
%\label{eq:minAlpha}
%\end{align}
%------------------------------------------------------------------
%------------------------------------------------------------------
\begin{gather}
\text{find coefficients $(\alpha_0, \ldots, \alpha_i)$ that minimize } \nonumber\\
 \| \bg(\bar{\bu}_{i+1})+\sum_{j=0}^{i} \, \alpha_j \, (\bg(\bar{\bu}_{i+1})-\bg(\bu_{j})) \|_2.
\label{eq:minAlpha}
\end{gather}
%------------------------------------------------------------------
This is a standard least-squares problem that can be solved, for example, by using the normal equations, as explained in \cite{WashioNGMRES-ETNA,NGMRES}. (In this paper, we solve the least-squares problem as described in \cite{NGMRES}.)

In a windowed implementation with window size $w$, the memory cost incurred by N-GMRES acceleration is the storage of $w$ previous approximations and residuals. The dominant parts of the CPU cost for each acceleration step are the cost of building and solving the least-squares system (which can be done in approximately $2 n w$ flops if the normal equations are used and some previous inner products are stored, see \cite{WashioNGMRES-ETNA}), and $n w$ flops to compute the accelerated iterate. For problems with expensive objective functions, this cost is often negligible compared to the cost of the $f/g$ evaluations in the line searches \cite{NGMRES}.

%*****************************************************************
\subsection{Motivation and Interpretation for Steepest Descent Preconditioning}
\label{subsec:Motiv}
%*****************************************************************
Consider a standard quadratic minimization problem with objective function
%------------------------------------------------------------------
\begin{align}
f(\bu)=\frac{1}{2} \, \bu^T  A \bu  -  \bb^T  \bu,
\label{eq:fsimple}
\end{align}
%------------------------------------------------------------------
where $A$ is SPD. It is well-known that its unique minimizer satisfies $A \bu=\bb$.
Now consider applying the N-GMRES optimization algorithm with steepest descent preconditioner to the quadratic minimization problem. The gradient of $f$ at approximation $\bu_i$ is given by
%------------------------------------------------------------------
\begin{align}
\nabla f(\bu_i)=A\bu_i-b=-\br_i \quad \text{with} \quad \br_i=b-A \bu_i,
\label{eq:gradsimple}
\end{align}
%------------------------------------------------------------------
where $\br_i$ is defined as the residual of the linear system $A \bu=\bb$ in $\bu_i$.
N-GMRES steepest descent preconditioner (\ref{eq:steepestA})-(\ref{eq:steepestB}) then reduces to the form
%------------------------------------------------------------------
\begin{align}
\bar{\bu}_{i+1}=\bu_i+\beta \, \frac{\br_i}{\|\br_i\|},
\label{eq:precondsimple}
\end{align}
%------------------------------------------------------------------
and it can easily be shown that this corresponds to the stationary iterative method that generates the Krylov space in non-preconditioned linear GMRES applied to $A \bu=\bb$. We now briefly show this because it provides further insight (recalling parts of the discussion in \cite{WashioNGMRES-ETNA,NGMRES}). 

We first explain how preconditioned GMRES for $A \bu=\bb$ works.
Consider so-called stationary iterative methods for $A \bu=\bb$ of the following form:
%------------------------------------------------------------------
\begin{align}
\bu_{i+1}=\bu_i+M^{-1}\, \br_i.
\label{eq:stat}
\end{align}
%------------------------------------------------------------------
Here, matrix $M$ is an approximation of $A$ that has an easily computable inverse, i.e., $M^{-1}\approx A^{-1}$.
For example, $M$ can be chosen to correspond to Gauss-Seidel or Jacobi iteration, or to a multigrid cycle \cite{WashioNGMRES-ETNA}.

Consider a sequence of iterates $\bu_0, \ldots,\bu_i$ generated by update formula (\ref{eq:stat}), starting from some initial guess $\bu_0$. Note that the residuals of these iterates are related as $\br_i=\bb-A\,\bu_i=(I-A M^{-1})\,\br_{i-1}=(I-A M^{-1})^i\,\br_0.$ 
This motivates the definition of the following vector spaces:
%------------------------------------------------------------------
\begin{align}
V_{1,i+1}&=\mathop{span}\{ \br_0,\ldots,\br_i\}, \nonumber\\
V_{2,i+1}&=\mathop{span}\{ \br_0, A M^{-1} \, \br_0, (A M^{-1})^2 \, \br_0\}, \ldots, (A M^{-1})^i \, \br_0\} \nonumber\\
  &=K_{i+1}(A M^{-1},\br_0), \nonumber\\
V_{3,i+1}&=\mathop{span}\{ M \, (\bu_{i+1}-\bu_0),  M \, (\bu_{i+1}-\bu_1),\ldots,  M \, (\bu_{i+1}-\bu_i) \}.\nonumber
\end{align}
%------------------------------------------------------------------
Vector space $V_{2,i+1}$ is the so-called Krylov space $K_{i+1}(A M^{-1},\br_0)$ of order $i+1$, generated by matrix $A M^{-1}$ and vector $\br_0$.
It is easy to show that these vector spaces are equal (see, e.g., \cite{WashioNGMRES-ETNA,NGMRES}).

Expression (\ref{eq:stat}) shows that $M \, (\bu_{i+1}-\bu_i) \in K_{i+1}(A M^{-1},\br_0)$. The GMRES procedure can be seen as a way to accelerate stationary iterative method (\ref{eq:stat}), by recombining iterates (or, equivalently, by reusing residuals). In particular, we seek a better approximation $\hat{\bu}_{i+1}$, with $M \, (\hat{\bu}_{i+1}-\bu_i)$ in the Krylov space $K_{i+1}(A M^{-1},\br_0)$, such that $\hat{\br}_{i+1}=\bb-A\,\hat{\bu}_{i+1}$ has minimal two-norm.
In other words, we seek optimal coefficients $\beta_j$ in
%------------------------------------------------------------------
\begin{align*}
M \, (\hat{\bu}_{i+1}-\bu_i) &= \sum_{j=0}^{i} \, \beta_j \, M \, (\bu_{i+1}-\bu_j),\\
%\label{eq:resid}
\end{align*}
%------------------------------------------------------------------
and it is easy to show that this corresponds to seeking optimal coefficients $\alpha_j$ in
%------------------------------------------------------------------
\begin{align}
\hat{\bu}_{i+1}&=\bu_{i+1} + \sum_{j=0}^{i} \, \alpha_j \, (\bu_{i+1}-\bu_j),
\label{eq:GMRESopt}
\end{align}
%------------------------------------------------------------------
such that $\|\hat{\br}_{i+1}\|_2$ is minimized (which leads to a small least-squares problem equivalent to
(\ref{eq:minAlpha})). Note that $V_{1,i+1}$ and $V_{2,i+1}$ do not easily generalize to the nonlinear case, but the image of $V_{1,i+1}$ under $M^{-1}$, $\mathop{span}\{ \bu_{i+1}-\bu_0,  \bu_{i+1}-\bu_1,\ldots,  \bu_{i+1}-\bu_i \}$, does generalize naturally and is taken as the `generalized Krylov space' that is used to seek the approximation in the nonlinear case.

Up to this point, we have presented GMRES as a way to accelerate one-step stationary iterative method (\ref{eq:stat}). A more customary way, however, to see GMRES is in terms of preconditioning. The approach described above reduces to `non-preconditioned' GMRES when one sets $M=I$. Applying non-preconditioned GMRES to the preconditioned linear equation system $A M^{-1} (M \bu)=\bb$ also results in the expressions for preconditioned GMRES derived above. In this viewpoint, the matrix $M^{-1}$ is called the preconditioner matrix, because its role is viewed as to pre-condition the spectrum of the linear system operator such that the (non-preconditioned) GMRES method applied to 
$(A M^{-1}) \by=\bb$ becomes more effective. 
It is also customary to say that the stationary iterative process preconditions GMRES (for example, Gauss-Seidel or Jacobi can  precondition GMRES). We can summarize that the role of the stationary iterative method is to generate preconditioned residuals that build the Krylov space.

In the presentation above, all iterates $\bu_j$ for $j=0,\ldots,i$ (for instance, in the right-hand side of (\ref{eq:GMRESopt})) refer to the unaccelerated iterates generated by stationary iterative method (\ref{eq:stat}). However, the formulas remain valid when accelerated iterates are used instead; this does change the values of the coefficients $\alpha_j$, but leads to the same accelerated iterates \cite{WashioNGMRES-ETNA}. This is so because the Krylov spaces generated in the two cases are identical due to linearity, and consequently GMRES selects the same optimal improved iterate. 
%In the nonlinear case, the relevant generalization of the Krylov space is obtained as the image of $V_{3,i+1}$ under $M^{-1}$: $\mathop{span}\{ (\bu_{i+1}-\bu_0),(\bu_{i+1}-\bu_1),\ldots,  (\bu_{i+1}-\bu_i) \}$ is indeed well-defined in the nonlinear case as well, and is the space in which nonlinear GMRES seeks improved approximations.

This brings us to the point where we can compare steepest-descent preconditioned N-GMRES applied to quadratic objective function (\ref{eq:fsimple}) with SPD operator $A$, to non-preconditioned linear GMRES applied to $A \bu=\bb$.  Assume we have $w$ previous iterates $\bu_i$ and residuals $\br_i$. Stationary iterative process (\ref{eq:stat}) without preconditioner ($M=I$) would add a vector to the Krylov space which has the same direction as the vector that would be added to it by the steepest descent preconditioning process (\ref{eq:precondsimple}). This means that the accelerated iterate $\hat{\bu}_{i+1}$ produced by N-GMRES with steepest descent preconditioner applied to 
quadratic objective function (\ref{eq:fsimple}) with SPD operator $A$ is the same as the accelerated iterate $\hat{\bu}_{i+1}$ produced by linear GMRES with identity preconditioner applied to $A \bu=\bb$.
%If one further assumes that the line search process in Step III of Algorithm \ref{alg:N-GMRES} applied to $f(\bu)$ from
%(\ref{eq:fsimple}) always selects the accelerated iterate $\hat{\bu}_{i+1}$ (which can indeed be expected for the simple quadratic objective function, since $\beta=1$ is normally chosen as the initial guess for step length $\beta$), then all iterates produced by steepest-descent preconditioned N-GMRES applied to $f(\bu)$ will be the same as the iterates produced by linear GMRES applied to $A \bu=\bb$.
%This shows that, for the simple quadratic $f(\bu)$ from (\ref{eq:fsimple}) with $A$ SPD, steepest-descent preconditioned N-GMRES produces the same accelerated iterate as linear GMRES with identity preconditioner $M=I$ applied to $A \bu=\bb$. 
This motivates our proposal to use steepest descent preconditioning as the natural and most basic preconditioning process for the N-GMRES optimization algorithm applied to general nonlinear optimization problems.

Note that, in the case of linear systems, the efficiency of GMRES as an acceleration technique for stationary iterative methods can be understood in terms of how optimal polynomials can damp modes that are slow to converge \cite{WashioNGMRES-ETNA,SaadBook}. In the case of N-GMRES for nonlinear optimization, if the approximation is close to a stationary point and the nonlinear residual vector function $\bg(.)$ can be approximated well by linearization, then it can be expected that the use of the subspace $\mathop{span}\{ \bu_{i+1}-\bu_0,  \bu_{i+1}-\bu_1,\ldots,  \bu_{i+1}-\bu_i \}$ for acceleration may give efficiency similar to the linear case \cite{WashioNGMRES-ETNA}.
Note finally that the above also explains why a small step is allowed in the $sd$ preconditioner of (\ref{eq:steepestB}) (basically, in the linear case, the size of the coefficient does not matter for the Krylov space), and the linearization argument of (\ref{eq:linearize}) indicates that a small step may be beneficial.

%*****************************************************************
\subsection{Convergence Theory for N-GMRES Optimization with Steepest Descent Preconditioning}
\label{subsec:ConvTheory}
%*****************************************************************
We now formulate and prove a convergence theorem for N-GMRES Optimization Algorithm \ref{alg:N-GMRES} using steepest descent preconditioning with line search (\ref{eq:steepestA}). We assume that all line searches provide step lengths that satisfy the Wolfe conditions \cite{Nocedal}: 
%------------------------------------------------------------------
\begin{alignat}{1}
&\text{\sc sufficient decrease condition:} \nonumber \\
&\qquad f(\bu_i+\beta_i \bp_i)\le f(\bu_i)+c_1\, \beta_i \, \nabla f(\bu_i)^T  \bp_i, \label{eq:Wolfea}\\
&\text{\sc curvature condition:} \nonumber \\
&\qquad \nabla f(\bu_i+\beta_i \bp_i)^T \, \bp_i \ge c_2 \, \nabla f(\bu_i)^T  \bp_i, \label{eq:Wolfeb}
\end{alignat}
%------------------------------------------------------------------
with $0<c_1<c_2<1$.
Condition (\ref{eq:Wolfea}) ensures that large steps are taken only if they lead to a proportionally large decrease in $f$.
Condition (\ref{eq:Wolfeb}) ensures that a step is taken that is large enough to sufficiently increase the gradient of $f$ in the line search direction (make it less negative).
%sequence $\{\| \nabla f(\bu_i) \|\}$ not bounded away from 0
Global convergence (in the sense of convergence to a stationary point from any initial guess) can then be proved easily using standard approaches \cite{NocedalNCG,Nocedal}.\\
%++++++++++++++++++++++++++++++++++++++++++
%------------------------------------------------------------------
\begin{theorem}[Global convergence of N-GMRES optimization algorithm with steepest descent line search preconditioning]
Consider N-GMRES Optimization Algorithm \ref{alg:N-GMRES} with steepest descent line search preconditioning (\ref{eq:steepestA}) for Optimization Problem I, and assume that all line search solutions satisfy the Wolfe conditions, (\ref{eq:Wolfea}) and (\ref{eq:Wolfeb}). Assume that objective function $f$ is bounded below in $\mathbb{R}^n$ and that $f$ is continuously differentiable in an open set ${\cal N}$ containing the level set ${\cal L}=\{ \bu : f(\bu) \le f(\bu_0)\}$, where $\bu_0$ is the starting point of the iteration. Assume also that the gradient $\nabla f$ is Lipschitz continuous on $\cal{N}$, that is, there exists a constant $L$ such that $\|\nabla f(\bu) - \nabla f(\hat{\bu})\| \le L \|\bu-\hat{\bu}\|$ for all $\bu, \hat{\bu} \in \cal{N}$. Then the sequence of N-GMRES iterates $\{ \bu_0, \bu_1, \ldots\}$ is convergent to a fixed point of Optimization Problem I in the sense that
%------------------------------------------------------------------
\begin{align}
\lim_{i \rightarrow \infty} \| \nabla f(\bu_i) \| = 0.
\label{eq:lim}
\end{align}
%------------------------------------------------------------------
\label{thm:conv}
\end{theorem}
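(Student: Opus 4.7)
My plan is to observe that Algorithm \ref{alg:N-GMRES} with the \emph{sdls} preconditioner is morally steepest descent with line search, augmented by Steps II and III that can only further decrease $f$. The theorem then reduces to the standard Zoutendijk-type estimate for steepest descent (cf.\ Theorem 3.2 of Nocedal--Wright), which is tailor-made for this setting since the search direction in Step I satisfies $\cos\theta_i=1$.

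First I would establish monotone decrease $f(\bu_{i+1})\le f(\bar{\bu}_{i+1})\le f(\bu_i)$. The right inequality follows from the Wolfe sufficient-decrease condition (\ref{eq:Wolfea}) applied in Step~I with the unit descent direction $\bp_i=-\nabla f(\bu_i)/\|\nabla f(\bu_i)\|$, since $\nabla f(\bu_i)^T\bp_i=-\|\nabla f(\bu_i)\|<0$ as long as $\nabla f(\bu_i)\neq 0$ (otherwise we are already done). The left inequality holds in both branches of Step~III: if $\hat{\bu}_{i+1}-\bar{\bu}_{i+1}$ is a descent direction at $\bar{\bu}_{i+1}$, the Wolfe line search along it decreases $f$ by (\ref{eq:Wolfea}); otherwise the else-branch sets $\bu_{i+1}=\bar{\bu}_{i+1}$. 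A useful corollary is that every iterate $\bu_i$ (and every intermediate $\bar{\bu}_{i+1}$, after checking the line segment stays in $\cal N$ by the decrease property) lies in the level set $\cal L$, so the Lipschitz hypothesis is applicable where needed.

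Next I would run the standard Zoutendijk argument on Step~I only. The curvature condition (\ref{eq:Wolfeb}) gives
\begin{equation*}
[\nabla f(\bar{\bu}_{i+1})-\nabla f(\bu_i)]^T\bp_i \;\ge\; (c_2-1)\,\nabla f(\bu_i)^T\bp_i \;=\; (1-c_2)\,\|\nabla f(\bu_i)\|,
\end{equation*}
while Cauchy--Schwarz together with $\|\bp_i\|=1$ and Lipschitz continuity of $\nabla f$ bound the left-hand side above by $L\,\beta_{sdls}$. This yields the lower step-length bound $\beta_{sdls}\ge (1-c_2)\|\nabla f(\bu_i)\|/L$. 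Substituting into the sufficient-decrease condition (\ref{eq:Wolfea}) in Step~I gives
\begin{equation*}
f(\bu_i)-f(\bar{\bu}_{i+1}) \;\ge\; c_1\,\beta_{sdls}\,\|\nabla f(\bu_i)\| \;\ge\; \frac{c_1(1-c_2)}{L}\,\|\nabla f(\bu_i)\|^2.
\end{equation*}

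Finally, combining with $f(\bu_{i+1})\le f(\bar{\bu}_{i+1})$ from step one, telescoping, and using that $f$ is bounded below, I obtain $\sum_{i}\|\nabla f(\bu_i)\|^2<\infty$, which implies (\ref{eq:lim}). The only real subtlety is the bookkeeping in the first step: one must verify that the nonlinear GMRES acceleration and its globalization in Step~III cannot destroy the descent provided by Step~I. The explicit else-branch in Algorithm \ref{alg:N-GMRES} is designed precisely for this, so the argument goes through essentially verbatim from the classical steepest descent case. No new machinery beyond Zoutendijk's lemma is needed.
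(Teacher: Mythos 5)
Your proof is correct, and it rests on the same engine as the paper's --- the Zoutendijk estimate together with the observation that the steepest descent step in Step~I has $\cos\theta_i=1$ --- but the decomposition is genuinely different. The paper builds the interleaved sequence $\{\bv_0,\bv_1,\ldots\}=\{\bu_0,\bar{\bu}_1,\bu_1,\bar{\bu}_2,\ldots\}$ (dropping $\bar{\bu}_i$ when Step~III takes the else-branch), notes that every transition in that sequence is a Wolfe line search along a descent direction, invokes Zoutendijk's Theorem as a black box to get $\sum_i \cos^2\theta_i\,\|\nabla f(\bv_i)\|^2<\infty$, and then extracts the subsequence of the $\bu_i$, for which $\cos\theta_i=1$. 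You instead re-derive the one-step Zoutendijk bound $f(\bu_i)-f(\bar{\bu}_{i+1})\ge \tfrac{c_1(1-c_2)}{L}\|\nabla f(\bu_i)\|^2$ explicitly for the Step~I search, and handle Steps~II--III only through the monotonicity $f(\bu_{i+1})\le f(\bar{\bu}_{i+1})$ before telescoping. What your route buys is self-containedness and a cleaner isolation of what the acceleration steps must provide (merely non-increase of $f$, not any Wolfe structure); it also makes the constant in the summability estimate explicit. What the paper's route buys is brevity --- it delegates the Lipschitz/step-length argument entirely to the cited theorem --- and, as a byproduct, the slightly stronger statement that the Zoutendijk sum converges over \emph{all} line-search steps, which the paper then uses in its subsequent remark about why $\{\|\nabla f(\bar{\bu}_i)\|\}$ need not converge to zero. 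The one point to keep tidy in your version is the standard caveat you already flag: the Lipschitz bound is applied on the segment from $\bu_i$ to $\bar{\bu}_{i+1}$, which requires that segment to lie in $\mathcal{N}$; this is handled exactly as in the textbook proof of Zoutendijk's theorem and is not a gap.
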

\begin{proof}
Consider the sequence $\{ \bv_0,\bv_1,\ldots \}$ formed by the iterates $\bu_0$, $\bar{\bu}_1$, $\bu_1$, $\bar{\bu}_2$, $\bu_2$, $\ldots$ of Algorithm I, but with $\bar{\bu}_i$ removed if $\hat{\bu}_{i}-\bar{\bu}_i$ is not a descent direction in Step III of the algorithm. Then all iterates $\bv_i$ are of the form $\bv_i=\bv_{i-1} + \beta_{i-1} \bp_{i-1}$, with $\bp_{i-1}$ a descent direction and $\beta_{i-1}$ such that the Wolfe conditions are satisfied. According to Theorem 3.2 of \cite{Nocedal} (p. 38, Zoutendijk's Theorem), we have that
%------------------------------------------------------------------
\begin{align}
\sum_{i=0}^{\infty} \cos^2 \theta_i \, \| \nabla f(\bv_i) \|^2 < \infty,
\label{eq:zout}
\end{align}
%------------------------------------------------------------------
with
%------------------------------------------------------------------
\begin{align}
\cos \theta_i=\frac{-\nabla f(\bv_i)^T  \bp_i}{\| \nabla f(\bv_i)\| \, \|\bp_i\|},
\label{eq:theta}
\end{align}
%------------------------------------------------------------------
which implies that
%------------------------------------------------------------------
\begin{align}
\lim_{i \rightarrow \infty}  \cos^2 \theta_i \, \|  \nabla f(\bv_i) \|^2 = 0.
\label{eq:lim2}
\end{align}
%------------------------------------------------------------------
Consider the subsequence $\{\|\nabla f(\bu_i)\|\}$ of $\{\|\nabla f(\bv_i)\|\}$.
Since all the $\bu_i$ are followed by a steepest descent step in the algorithm, the $\theta_i$ corresponding to all the elements of $\{\|\nabla f(\bu_i)\|\}$ satisfy $\cos \theta_i=1$. Therefore, it follows from (\ref{eq:lim2}) that $\lim_{i \rightarrow \infty} \| \nabla f(\bu_i) \| = 0$, which concludes the proof.
\end{proof}
%------------------------------------------------------------------
%++++++++++++++++++++++++++++++++++++++++++

Note that the notion of convergence (\ref{eq:lim}) we prove in Theorem \ref{thm:conv} for N-GMRES optimization with steepest descent line search preconditioning is stronger than the type of convergence that can be proved for some N-CG methods \cite{NocedalNCG,Nocedal}, namely, 
%------------------------------------------------------------------
\begin{align}
\lim_{i \rightarrow \infty} \inf \| \nabla f(\bu_i) \| = 0.
\label{eq:liminf}
\end{align}
%------------------------------------------------------------------
Also, it appears that, in the proof of Theorem \ref{thm:conv}, we cannot guarantee that sequence $\{  \| \nabla f(\bar{\bu}_i)\| \}$ converges to 0. We know that sequence $\{  f(\bv_i) \}$ converges to a value $f^*$ since it is nonincreasing and bounded below, but it appears that the properties of the line searches do not guarantee that the sequence $\{  \| \nabla f(\bv_i)\| \}$ converges to 0. They do guarantee that the subsequence $\{  \| \nabla f(\bu_i)\| \}$ converges to 0, but it cannot be ruled out that, as the $f(\bu_i)$ approach $f^*$ and the $\| \nabla f(\bu_i)\|$ approach 0, large steps with very small decrease in $f$ may still be made from each $\bu_i$ to the next $\bar{\bu}_{i+1}$ (large steps with small decrease are allowed in this case since the $\bu_i$ approach a stationary point), while, at the same time, large steps with very small decrease in $f$ may be made from the $\bar{\bu}_{i+1}$ to the next $\bu_{i+1}$ (large steps with small decrease are allowed in this case if the search direction $\bp$ from $\bar{\bu}_{i+1}$ is such that $\nabla f(\bar{\bu}_{i+1})^T  \bp$ is very close to 0). These large steps may in principle preclude $\{  \| \nabla f(\bar{\bu}_i)\| \}$ from converging to 0 (but we do not observe such pathological cases in our numerical tests).
Nevertheless, we are able to prove the strong convergence result (\ref{eq:lim}) for the iterates $\bu_i$ of N-GMRES optimization with steepest descent line search preconditioning: sequence $\{  \| \nabla f(\bu_i)\| \}$ converges to 0. 
%%%%%%%%%%%%%%%%%%%%%%%%%%%%%%%%%%%%%%%%%%%%%%%%%%%%%%%%%%
%%%%%%%%%%%%%%%%%%%%%%%%%%%%%%%%%%%%%%%%%%%%%%%%%%%%%%%%%%
\section{Numerical Results}
\label{sec:numerics}
%%%%%%%%%%%%%%%%%%%%%%%%%%%%%%%%%%%%%%%%%%%%%%%%%%%%%%%%%%
%%%%%%%%%%%%%%%%%%%%%%%%%%%%%%%%%%%%%%%%%%%%%%%%%%%%%%%%%%
We now present extensive numerical results for the N-GMRES optimization algorithm with steepest descent preconditioners (\ref{eq:steepestA}) and (\ref{eq:steepestB}), compared with stand-alone steepest descent optimization, N-CG and L-BFGS.

In all tests, we utilize the Mor\'{e}-Thuente line search method \cite{MoreThuente} and the N-CG and L-BFGS optimization methods as implemented in the Poblano toolbox for Matlab \cite{POBLANO}. For all experiments, the Mor\'{e}-Thuente line search parameters used were as follows: function value tolerance $c_1=10^{-4}$ for (\ref{eq:Wolfea}), gradient norm tolerance $c_2=10^{-2}$ for (\ref{eq:Wolfeb}), starting search step length $\beta=1$, and a maximum of 20 $f/g$ evaluations are used. These values were also used  for the N-CG and L-BFGS comparison runs.  We use the N-CG variant with Polak-Ribi\`{e}re update formula, and the two-loop recursion version of L-BFGS \cite{Nocedal}.
%As suggested in \cite{WashioNGMRES-ETNA}, the parameter $\delta$ in the term $\delta \, I$ added to normal equation matrix $\bP^T \, \bP$ in (\ref{eq:normal}), was chosen as $\epsilon$ times the size of the largest diagonal element of $\bP^T \, \bP$, with $\epsilon=10^{-12}$. 
We normally choose the N-GMRES window size $w$ equal to 20, which is confirmed to be a good choice in numerical tests described below. The L-BFGS window size is chosen equal to 5 (we found that larger window sizes tend to harm L-BFGS performance for the tests we considered).
All initial guesses are determined uniformly randomly with components in the interval $[0,1]$, and when we compare different methods they are given the same random initial guess. All numerical tests were run on a laptop with a dual-core 2.53 GHz Intel Core i5 processor and 4GB of 1067 MHz DDR3 memory. Matlab version 7.11.0.584 (R2010b) 64-bit (maci64) was used for all tests.

%*****************************************************************
\subsection{Test Problem Description}
\label{subsec:tests}
%*****************************************************************
We first describe the seven test problems we consider. In what follows, all vectors are chosen in $\mathbb{R}^n$, and all matrices in $\mathbb{R}^{n \times n}$.

\noindent
{\sc Problem A. (Quadratic objective function with spd diagonal matrix.)} 
%------------------------------------------------------------------
\begin{align}
f(\bu)=\frac{1}{2} \, (\bu-\bu^*)^T D \, (\bu-\bu^*)+1,\\
 \text{with} \ D=\text{diag}(1,2,\ldots,n).\nonumber
\label{eq:fA}
\end{align}
%------------------------------------------------------------------
This problem has a unique minimizer $\bu^*$ in which $f^*=f(\bu^*)=1$. We choose $\bu^*=(1,\ldots,1)$.  Note that $\bg(\bu)=D (\bu-\bu^*),$ and the condition number of $D$ is given by $\kappa=n$. It is well-known that for problems of this type large condition numbers tend to lead to slow convergence of the steepest descent method due to a zig-zag effect. Problem A can be used to show how methods like N-CG and N-GMRES improve over steepest descent and mitigate this zig-zag effect.\\

\noindent
{\sc Problem B. (Problem A with paraboloid coordinate transformation.)} 
%------------------------------------------------------------------
\begin{align}
f(\bu)=\frac{1}{2} \, \by(\bu-\bu^*)^T D \, \by(\bu-\bu^*)+1,\\
 \text{with} \ D=\text{diag}(1,2,\ldots,n) \ \text{and} \ \by(\bx) \ \text{given by} \nonumber\\
y_1(\bx)=x_1 \ \text{and} \  y_i(\bx)=x_i-10 \, x_1^2 \ (i=2,\ldots,n). \nonumber
\label{eq:fB}
\end{align}
%------------------------------------------------------------------
This modification of Problem A still has a unique minimizer $\bu^*$ in which $f^*=f(\bu^*)=1$. We choose $\bu^*=(1,\ldots,1)$.  The gradient of $f(\bu)$ is given by $\bg(\bu)=D \, \by(\bu-\bu^*)-20 \, (u_1-u_1^*) \, (\sum_{j=2}^{n} (D \, \by(\bu-\bu^*))_j) \, [1,0,\ldots,0]^T$. This modification of Problem A increases nonlinearity (the objective function is now quartic in $\bu$) and changes the level surfaces from ellipsoids into parabolically skewed ellipsoids. As such, the problem is more difficult for nonlinear optimization methods. For $n=2$, the level curves are modified from elliptic to `banana-shaped'. In fact, the objective function of Problem B is a multi-dimensional generalization of Rosenbrock's `banana' function.\\

\noindent
{\sc Problem C. (Problem B with a random non-diagonal matrix with condition number $\kappa=n$.)} 
%------------------------------------------------------------------
\begin{align}
f(\bu)=\frac{1}{2} \, \by(\bu-\bu^*)^T T \, \by(\bu-\bu^*)+1,\\
 \text{with} \ T= Q \, \text{diag}(1,2,\ldots,n) \, Q^T, \ \text{where $Q$ is a}  \nonumber \\
  \text{random orthogonal matrix and} \ \by(\bx) \ \text{is given by} \nonumber\\
y_1(\bx)=x_1 \ \text{and} \  y_i(\bx)=x_i-10 \, x_1^2 \ (i=2,\ldots,n). \nonumber
\label{eq:fC}
\end{align}
%------------------------------------------------------------------
This modification of Problem B still has a unique minimizer $\bu^*$ in which $f^*=f(\bu^*)=1$. We choose $\bu^*=(1,\ldots,1)$.  The gradient of $f(\bu)$ is given by $\bg(\bu)=T \, \by(\bu-\bu^*)-20 \, (u_1-u_1^*) \, (\sum_{j=2}^{n} (T \, \by(\bu-\bu^*))_j) \, [1,0,\ldots,0]^T$. The random matrix $Q$ is the $Q$ factor obtained from a QR-factorization of a random matrix with elements uniformly drawn from the interval $[0,1]$. This modification of Problem B introduces nonlinear `mixing' of the coordinates (cross-terms) and further increases the difficulty of the problem.\\

%---------------------------------------------------------------------------------------------------------------------------------
\begin{figure}[!htbp]
  \centering
  \scalebox{0.35}{
  \includegraphics{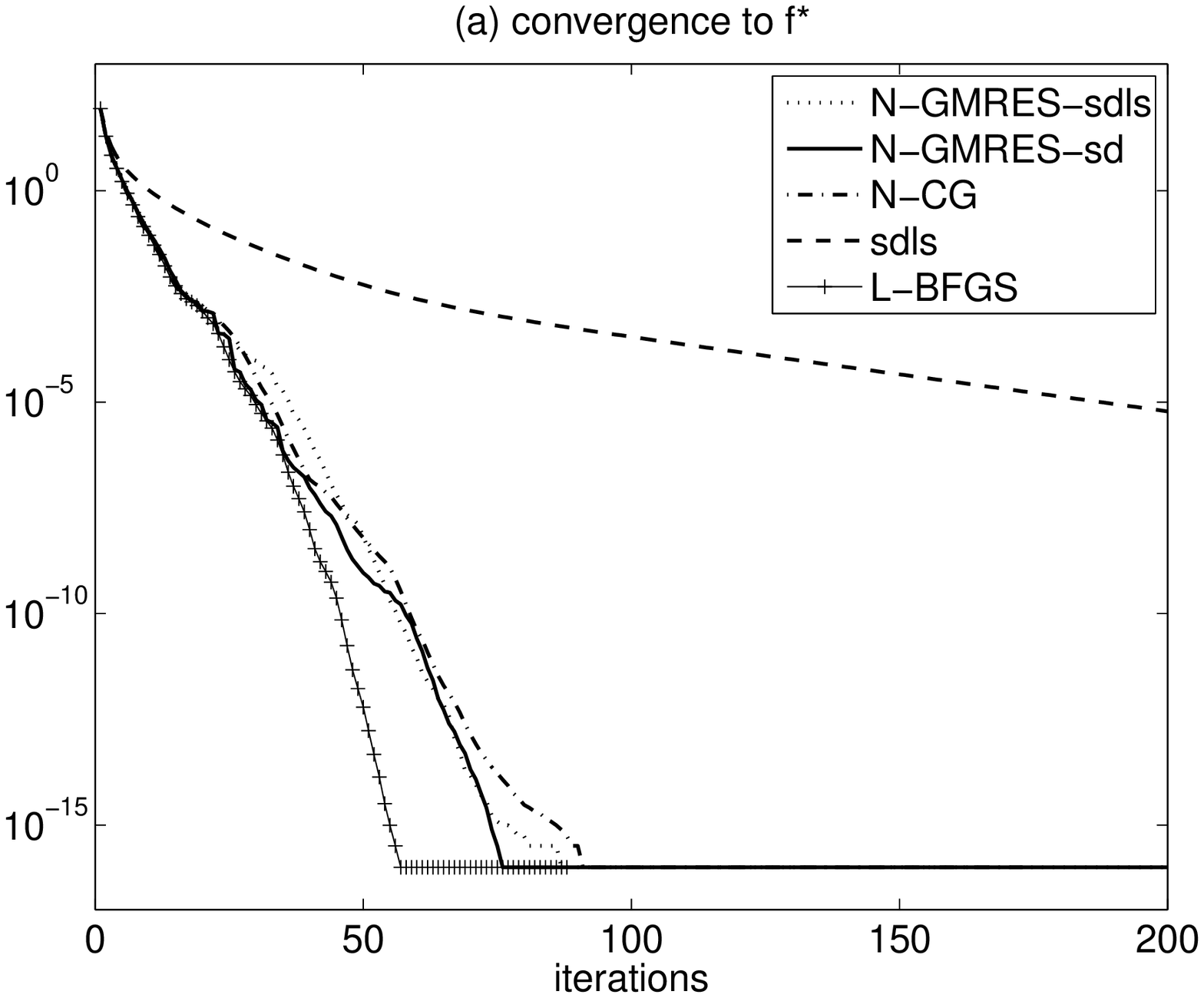}
  \includegraphics{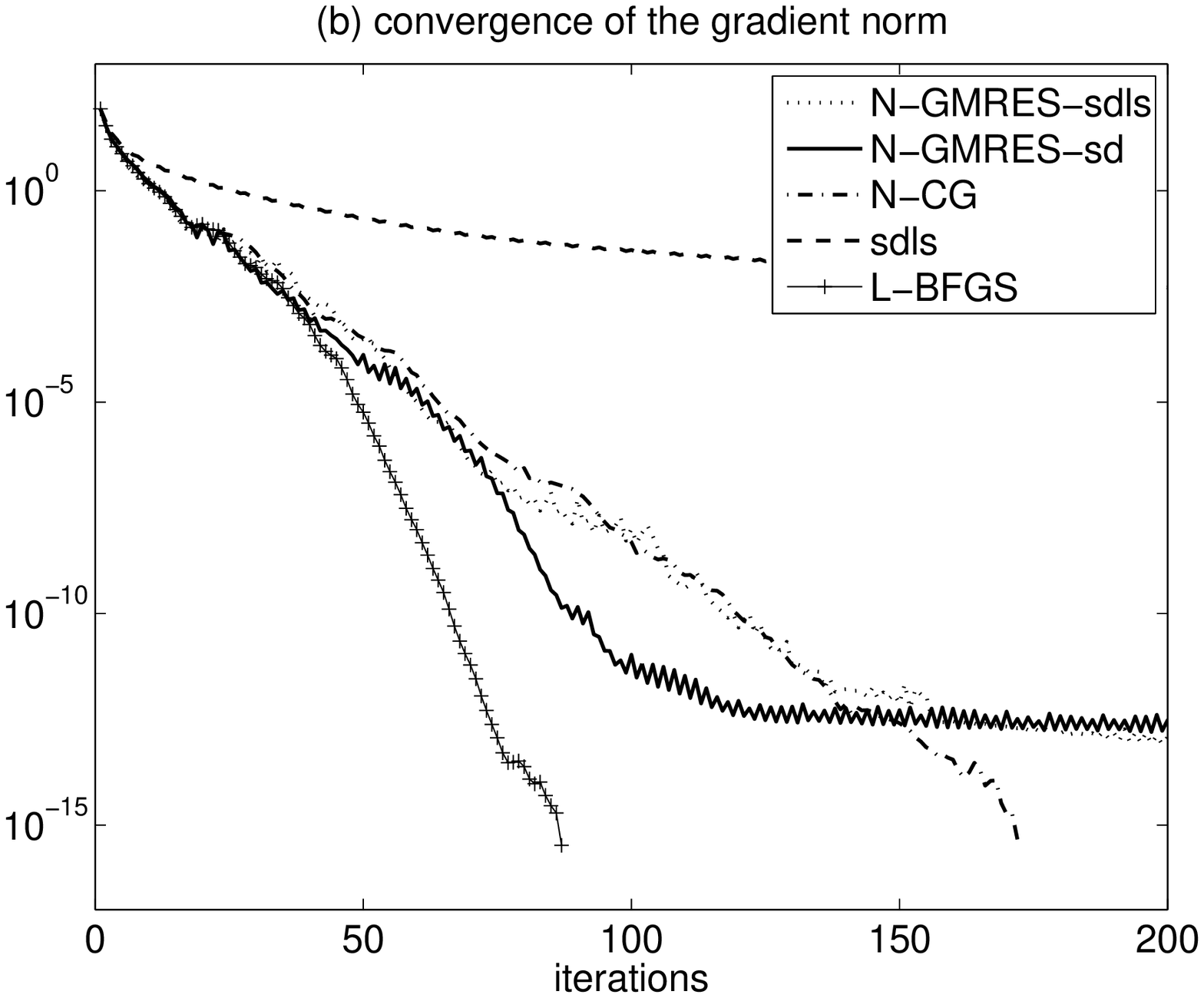}
  }
  \scalebox{0.35}{
  \includegraphics{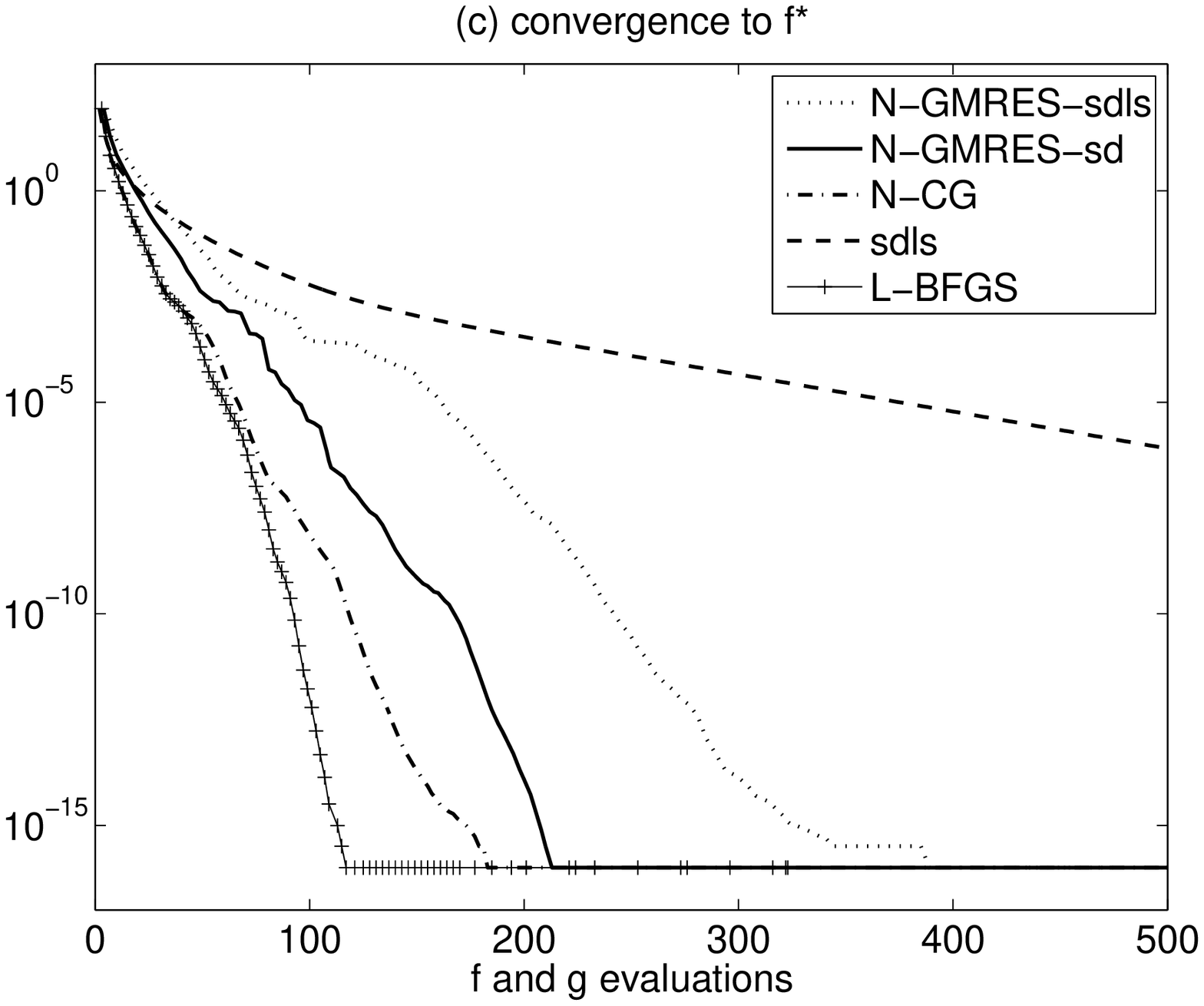}
  \includegraphics{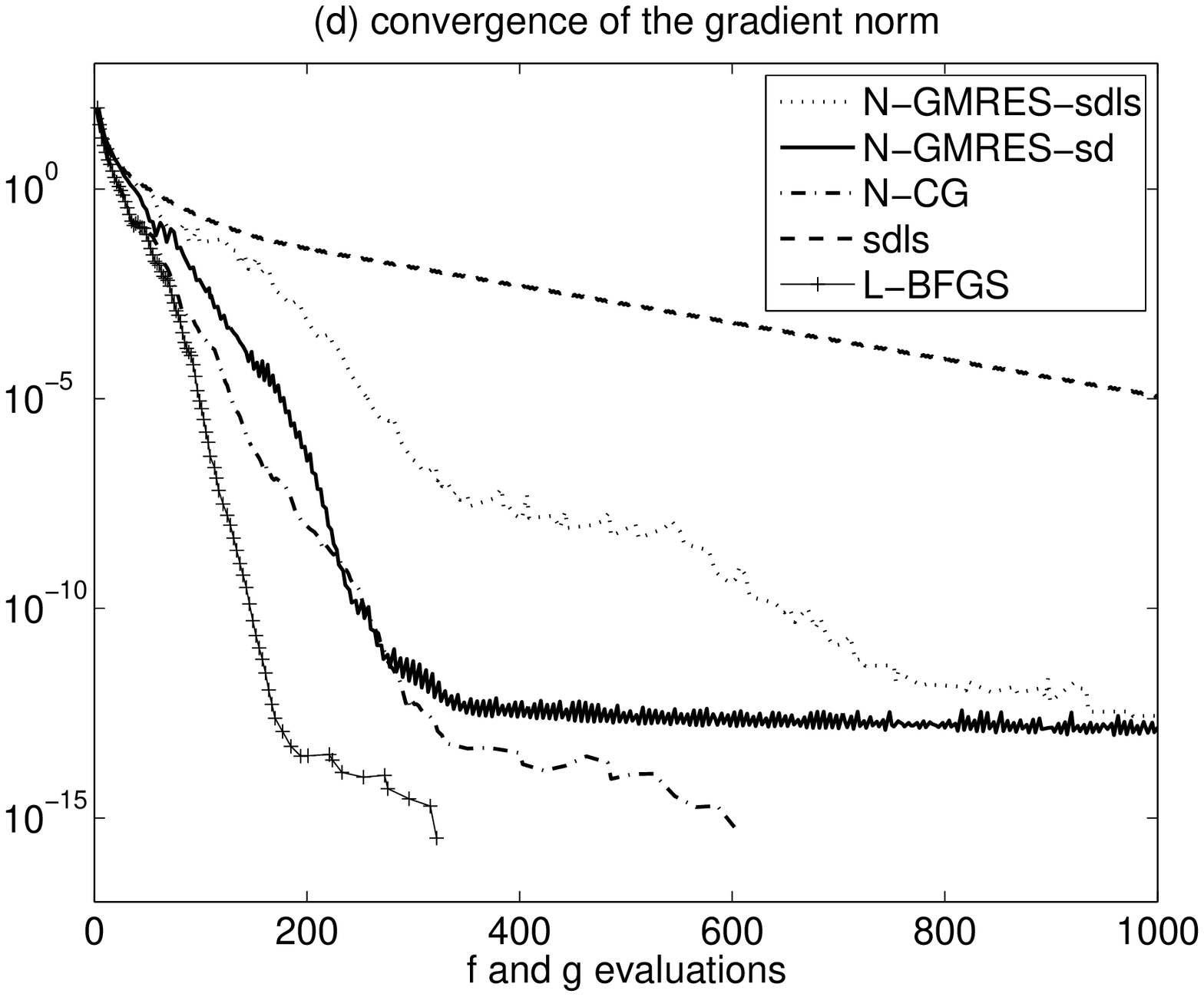}
  }
   \caption{Problem A ($n=100$). Convergence histories of the 10-logarithms of $|f(\bu_i)-f^*|$ and $\|\bg(\bu_i)\|$ as a function
   of iterations and $f/g$ evaluations. N-GMRES-sdls is the N-GMRES optimization algorithm using steepest descent preconditioning with line search, N-GMRES-sd is the N-GMRES optimization algorithm using steepest descent preconditioning with predefined step, N-CG is the Polak-Ribi\`{e}re nonlinear conjugate gradient method, L-BFGS is the limited-memory Broyden-Fletcher-Goldfarb-Shanno method, and sdls is the stand-alone steepest descent method with line search.}
   \label{fig:A}
\end{figure}    
%---------------------------------------------------------------------------------------------------------------------------------
%---------------------------------------------------------------------------------------------------------------------------------
\begin{figure}[!htbp]
  \centering
  \scalebox{0.35}{
  \includegraphics{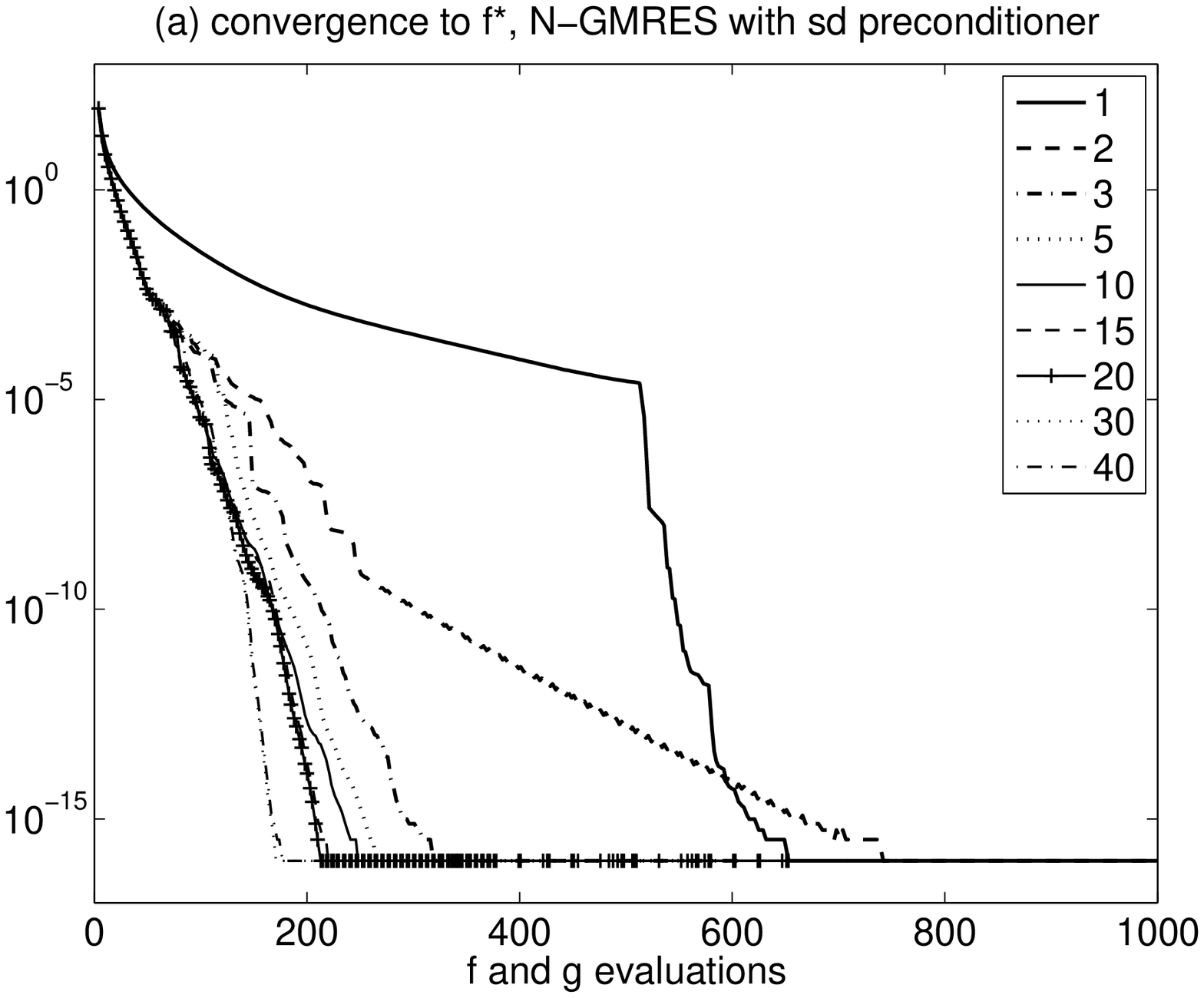}
  \includegraphics{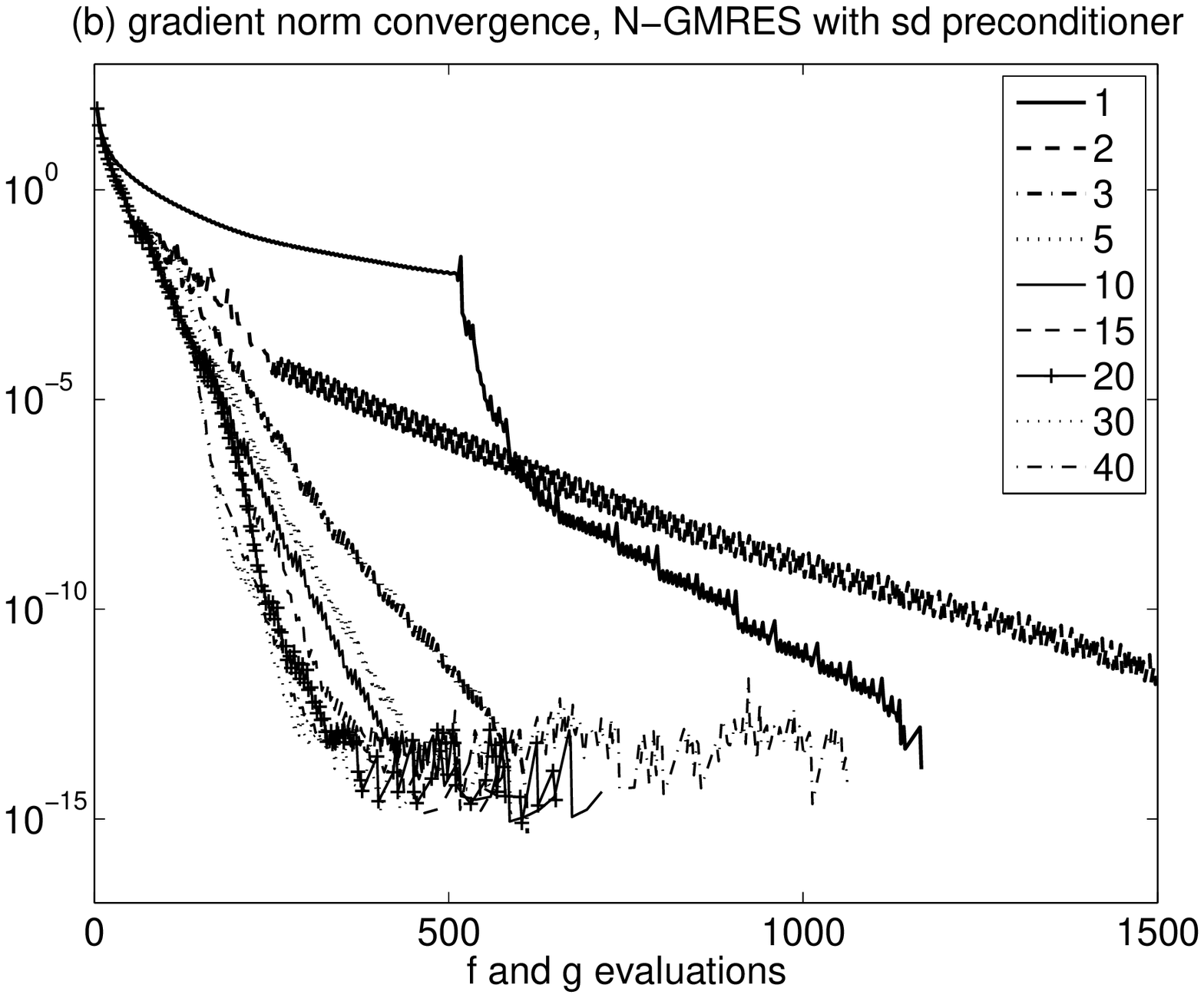}
  }
  \scalebox{0.35}{
  \includegraphics{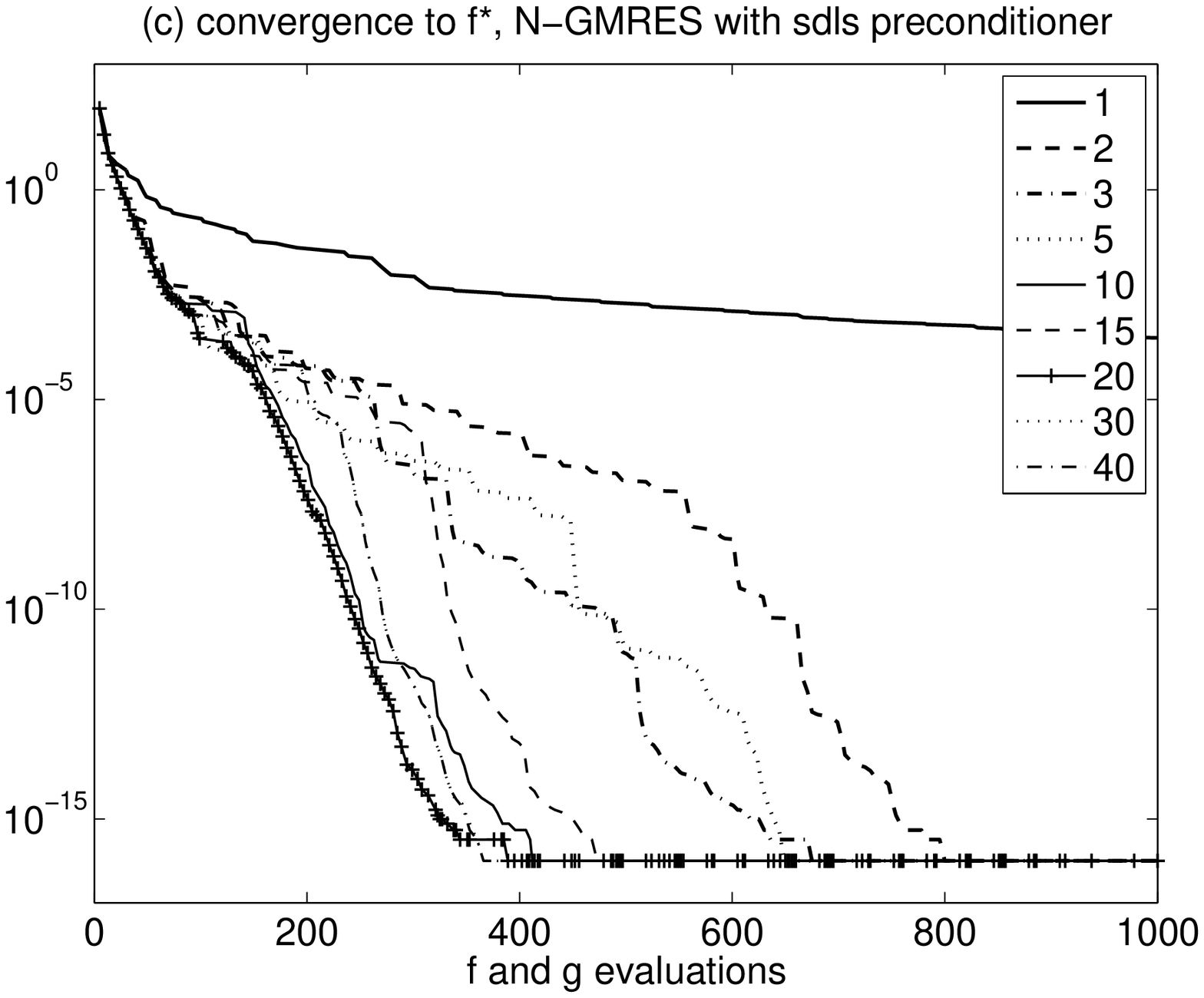}
  \includegraphics{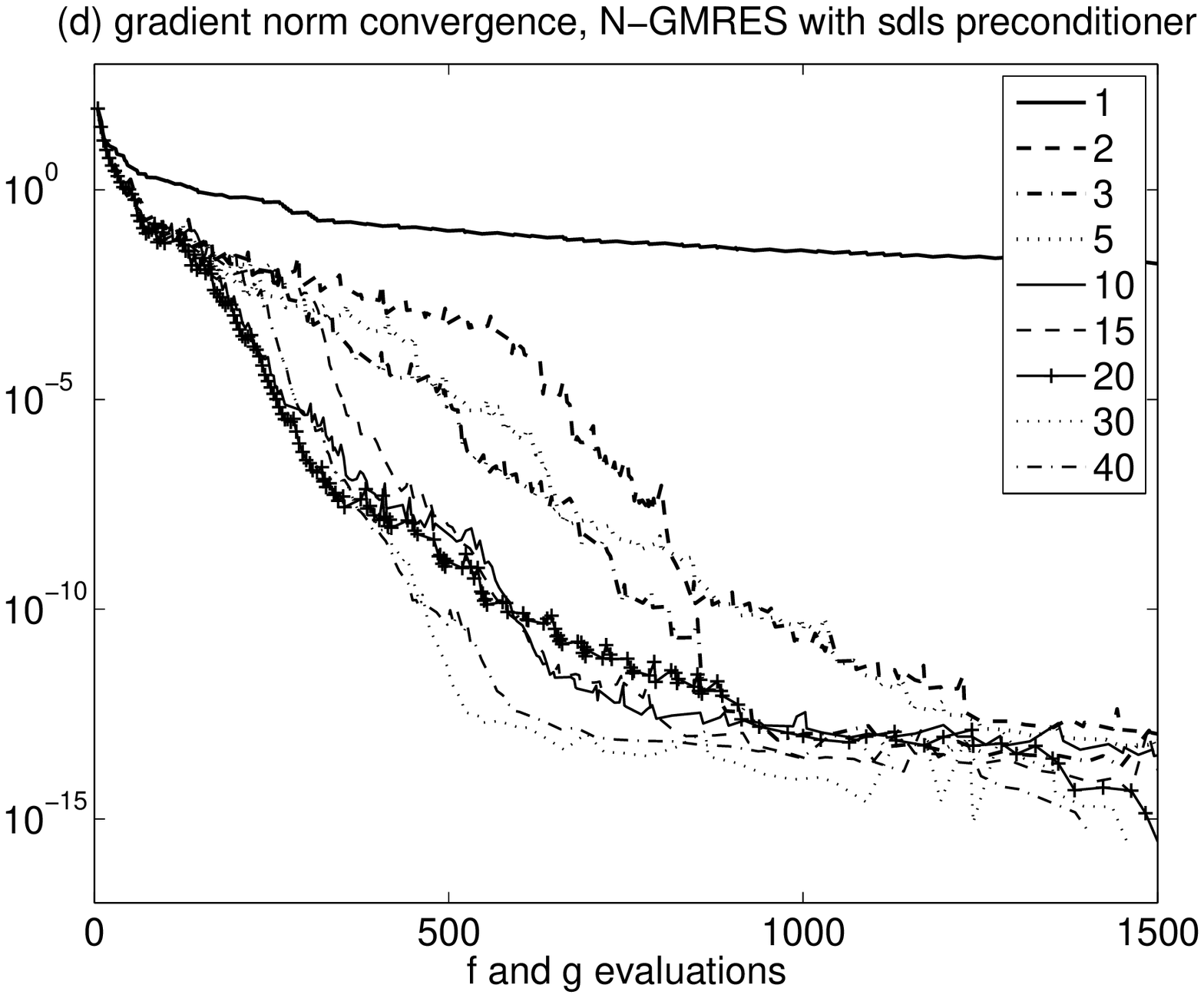}
  }
   \caption{Problem A ($n=100$). Effect of varying window size $w$ on $|f(\bu_i)-f^*|$ and $\|\bg(\bu_i)\|$ convergence for N-GMRES-sdls and N-GMRES-sd optimization as a function of $f/g$ evaluations. Window size $w=20$ emerges as a suitable choice, leading to rapid convergence. These results give some general indication that, if sufficient memory is available, $w=20$ may be a good choice. However, if memory is scarce, $w=3$ already provides good results, especially for N-GMRES-sd.}
\label{fig:w}
\end{figure}    
%---------------------------------------------------------------------------------------------------------------------------------

\noindent
{\sc Problem D. (Extended Rosenbrock function, problem (21) from \cite{MoreTest}.)} 
%------------------------------------------------------------------
\begin{alignat}{1}
f(\bu)&=\frac{1}{2} \, \sum_{j=1}^n \, t_j^2(\bu), \ \text{with $n$ even and}\nonumber\\
t_j&=10 \, (u_{j+1}-u_j^2) \qquad \text{($j$ odd),} \nonumber \\
t_j&=1-u_{j-1} \qquad \text{($j$ even).} \nonumber
%\label{eq:fD}
\end{alignat}
%------------------------------------------------------------------
Note that $\bg(\bu)$ can easily be computed using $g_k(\bu)=\sum_{j=1}^n \, t_j \, \partial t_j / \partial u_k$ ($k=1,\ldots,n$).\\

\noindent
{\sc Problem E. (Brown almost-linear function, problem (27) from \cite{MoreTest}.)} 
%------------------------------------------------------------------
\begin{alignat}{1}
f(\bu)&=\frac{1}{2} \, \sum_{j=1}^n \, t_j^2(\bu), \ \text{with}\nonumber\\
t_j&=u_j + (\sum_{i=1}^{n} u_i)-(n+1) \qquad \text{($j<n$),} \nonumber \\
t_n&=(\prod_{i=1}^{n} u_i)-1. \nonumber
%\label{eq:fE}
\end{alignat}
%------------------------------------------------------------------

\noindent
{\sc Problem F. (Trigonometric function, problem (26) from \cite{MoreTest}.)} 
%------------------------------------------------------------------
\begin{alignat}{1}
f(\bu)&=\frac{1}{2} \, \sum_{j=1}^n \, t_j^2(\bu), \ \text{with}\nonumber\\
t_j&=n-(\sum_{i=1}^{n} \, \cos u_i) - j\, (1-\cos u_j )-\sin u_j. \nonumber
%\label{eq:fF}
\end{alignat}
%------------------------------------------------------------------

\noindent
{\sc Problem G. (Penalty function I, problem (23) from \cite{MoreTest}.)} 
%------------------------------------------------------------------
\begin{alignat}{1}
f(\bu)&=\frac{1}{2} \, ((\sum_{j=1}^n \, t_j^2(\bu))+ t_{n+1}^2(\bu)), \ \text{with}\nonumber\\
t_j&=\sqrt{10^{-5}} \, (u_j-1) \qquad \text{($j=1,\ldots,n$),} \nonumber \\
t_{n+1}&=(\sum_{i=1}^{n} \, u_i^2)-0.25. \nonumber
%\label{eq:fG}
\end{alignat}
%------------------------------------------------------------------

%*****************************************************************
\subsection{Numerical Results for Problems A--C}
\label{subsec:AC}
%*****************************************************************
%---------------------------------------------------------------------------------------------------------------------------------
\begin{figure}[!htbp]
  \centering
  \scalebox{0.35}{
  \includegraphics{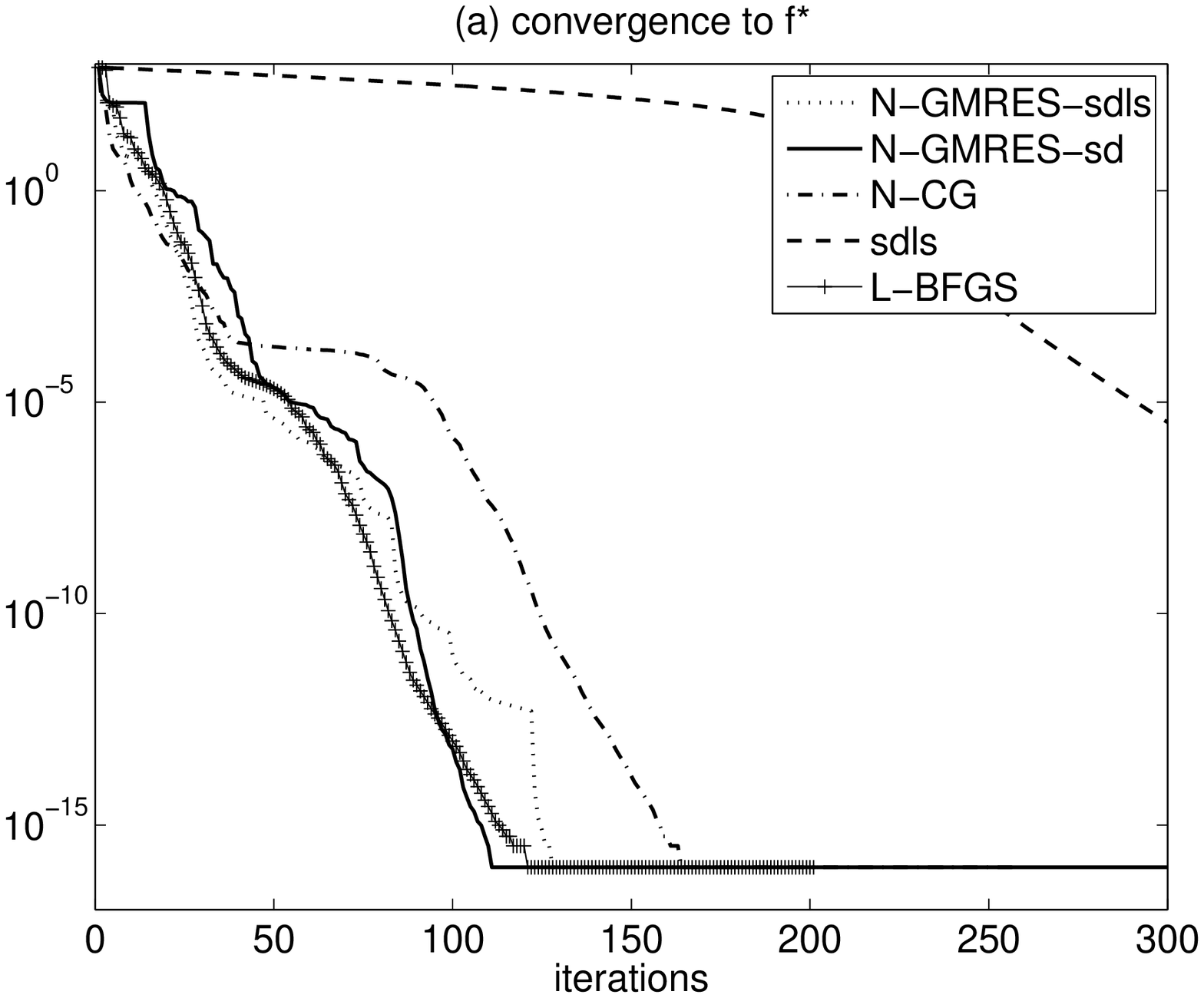}
  \includegraphics{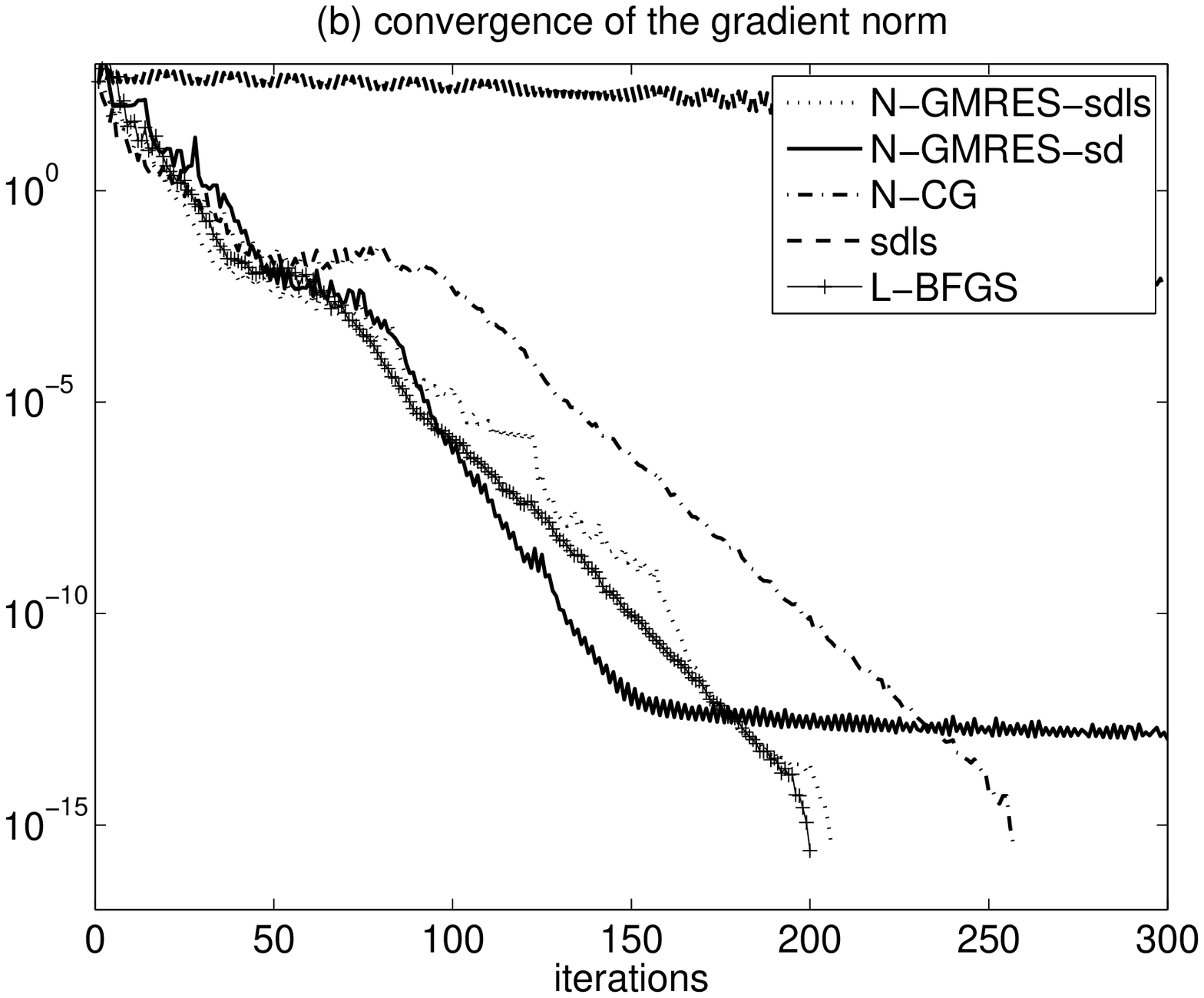}
  }
  \scalebox{0.35}{
  \includegraphics{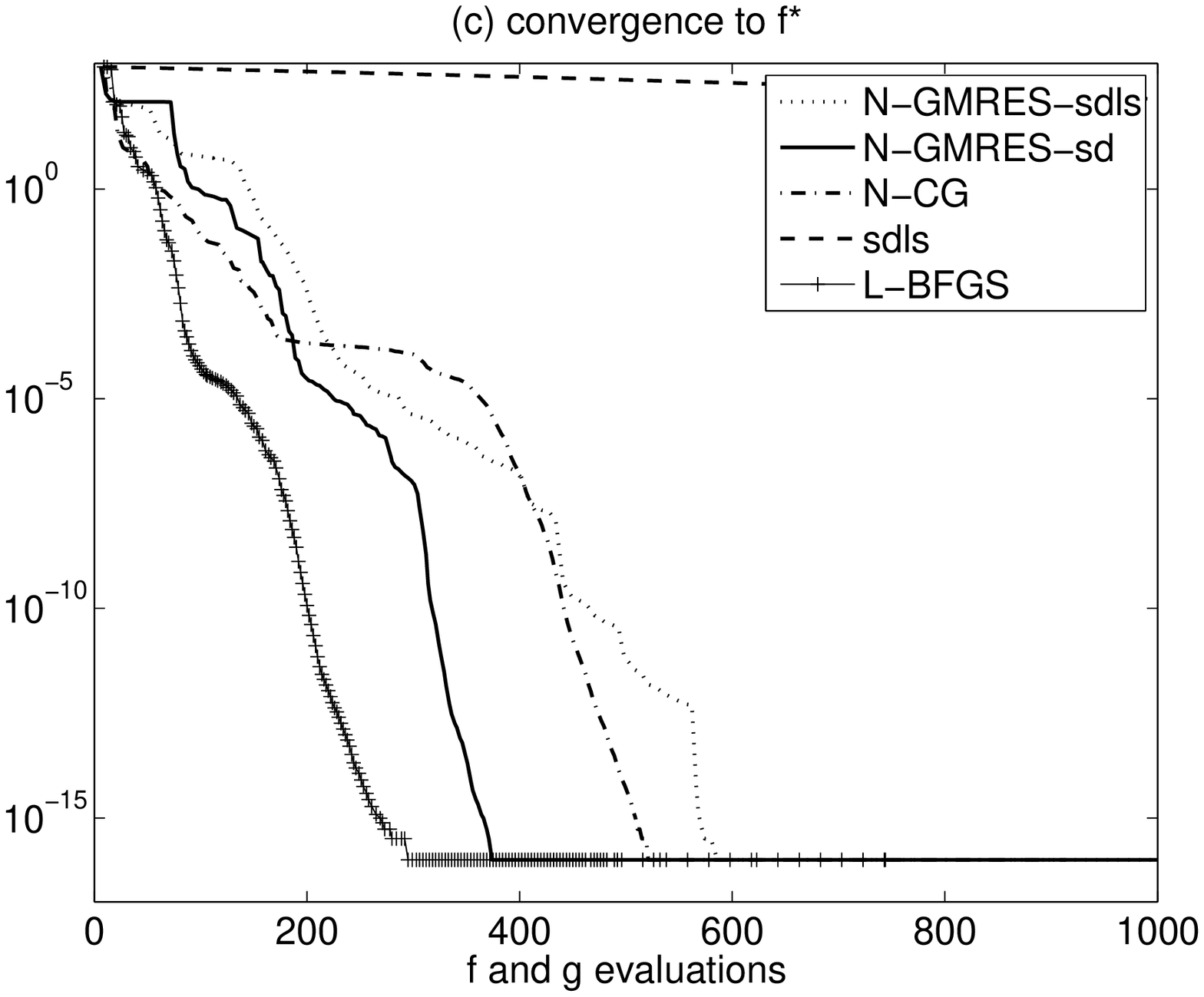}
  \includegraphics{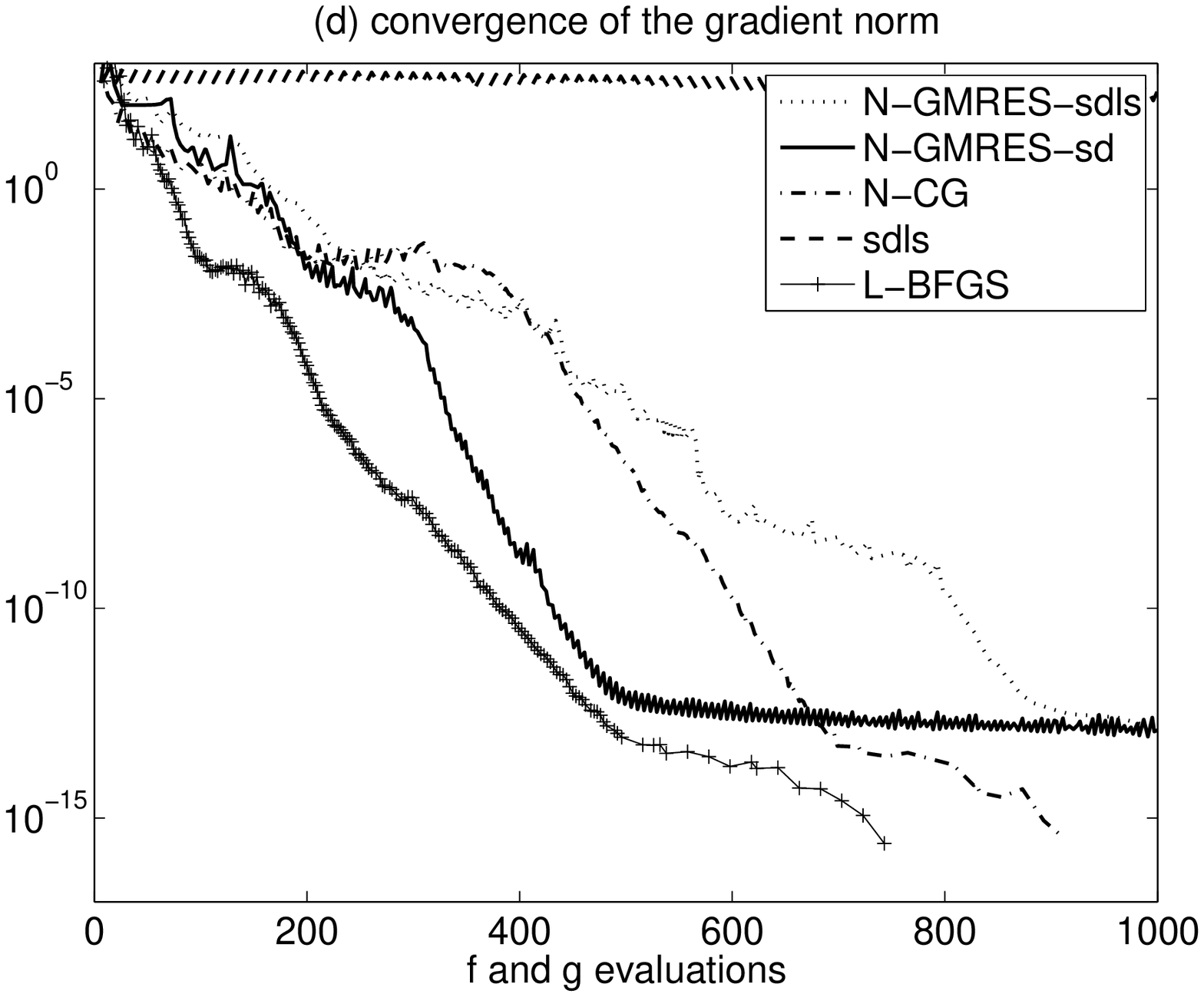}
  }
   \caption{Problem B ($n=100$). Convergence comparison.}
   \label{fig:B}
\end{figure}    
%---------------------------------------------------------------------------------------------------------------------------------
%---------------------------------------------------------------------------------------------------------------------------------
\begin{figure}[!htbp]
  \centering
  \scalebox{0.35}{
  \includegraphics{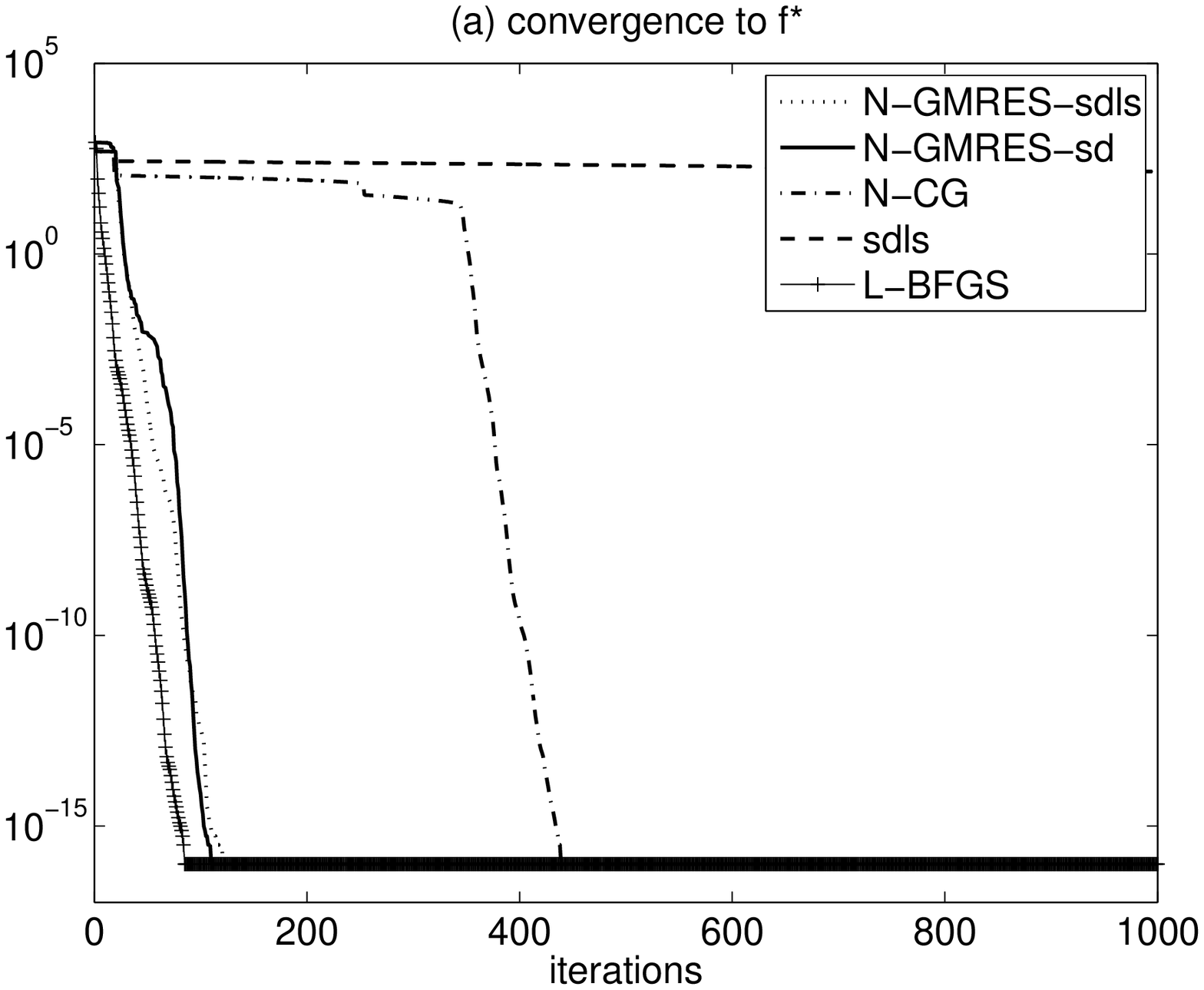}
  \includegraphics{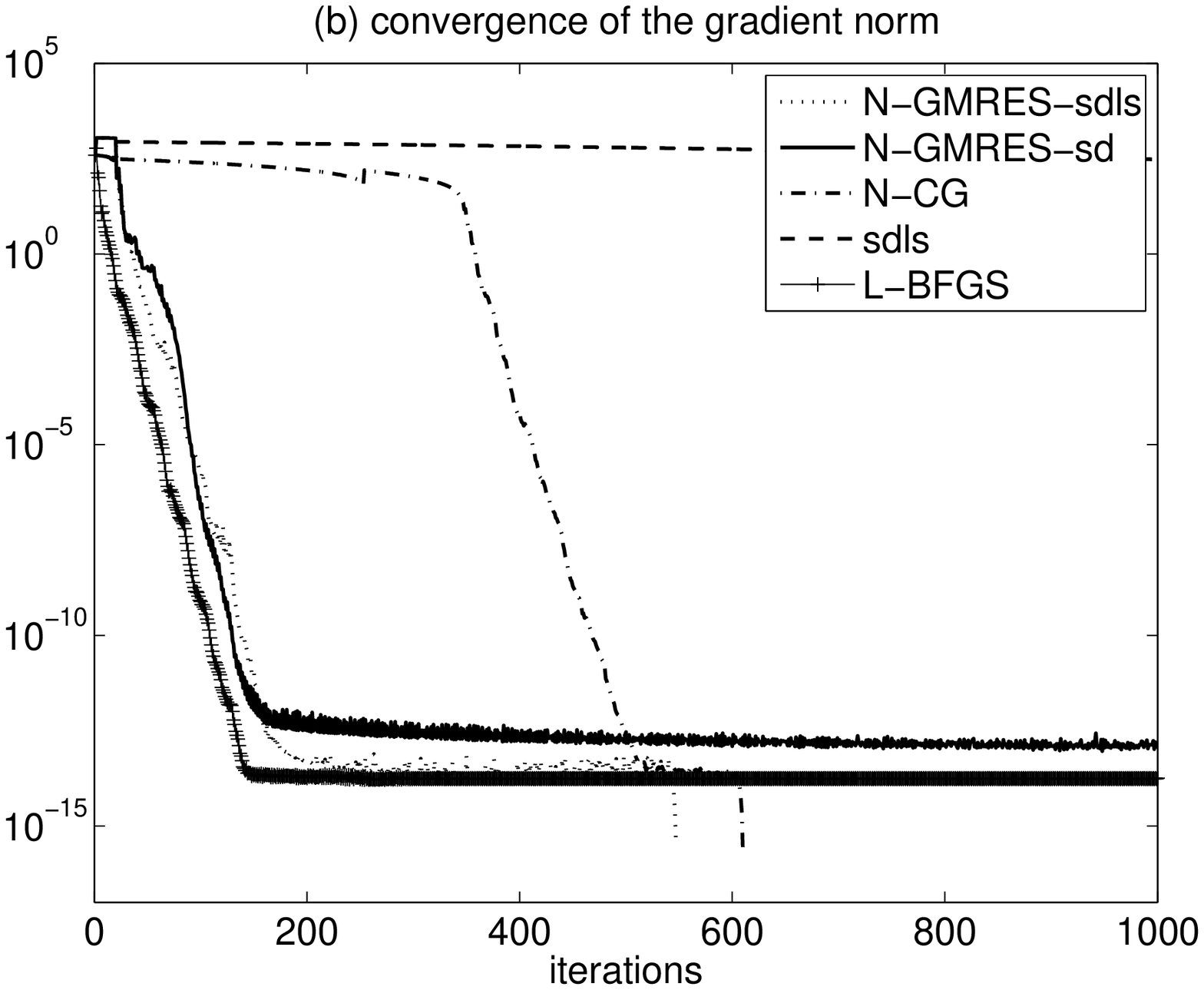}
  }
  \scalebox{0.35}{
  \includegraphics{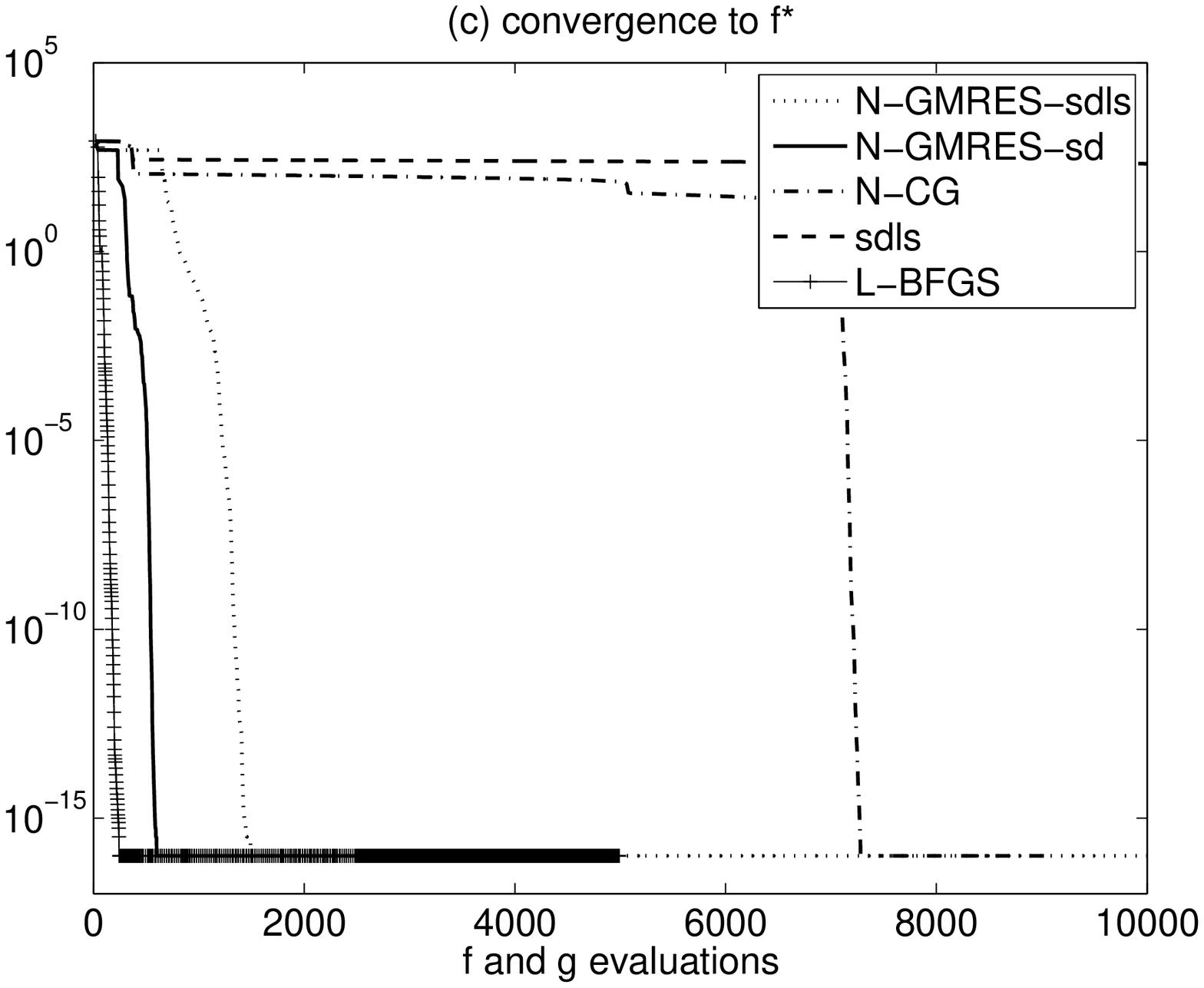}
  \includegraphics{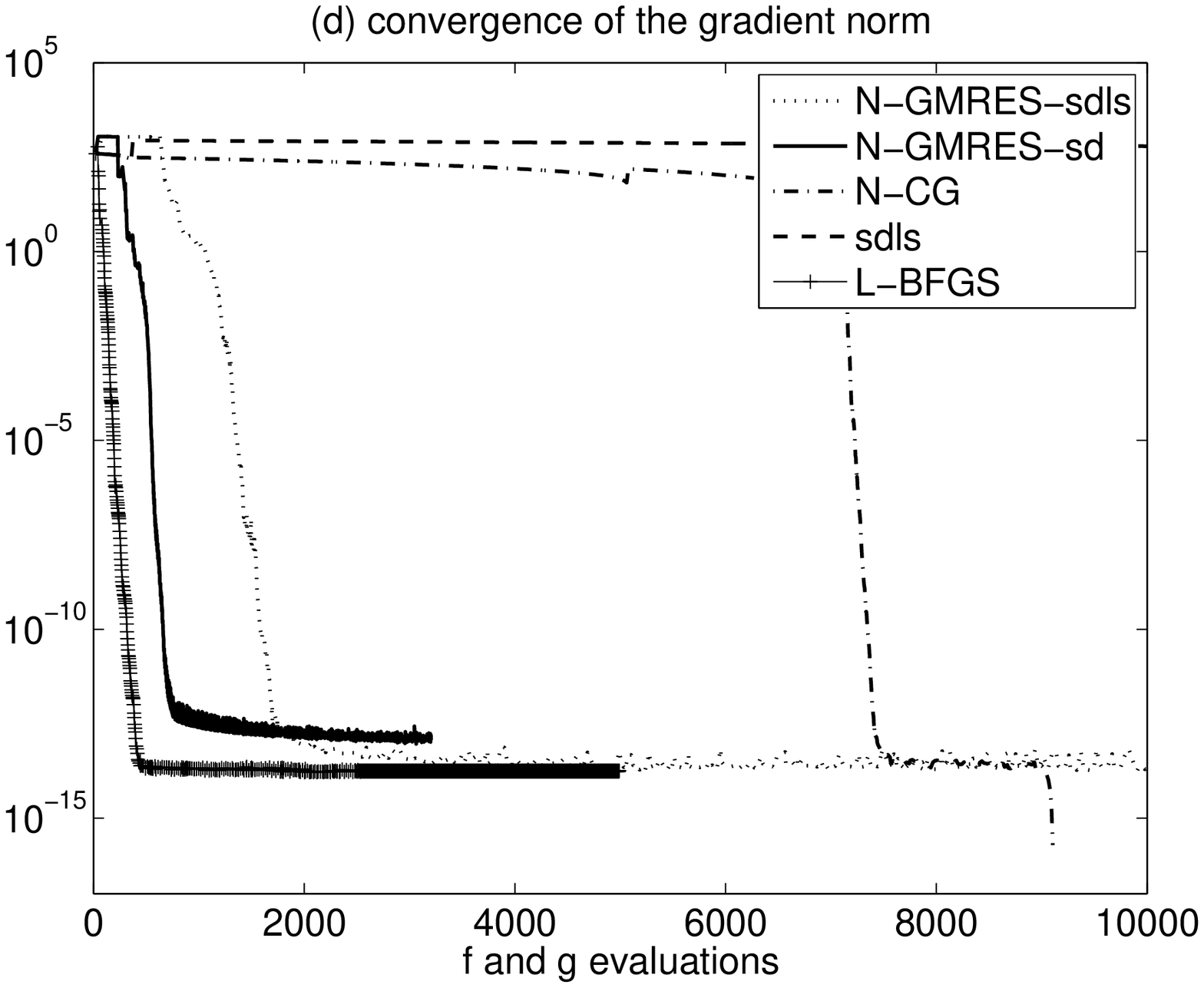}
  }
   \caption{Problem C ($n=100$). Convergence comparison.}
   \label{fig:C}
\end{figure}    
%---------------------------------------------------------------------------------------------------------------------------------

We first present some convergence plots for instances of Problems A--C.
Fig.\ \ref{fig:A} shows results for an instance of Problem A.
We see that stand-alone steepest descent with line search (sdls) converges slowly, which is expected because the condition number of matrix $D$ is $\kappa=100$. Both N-GMRES optimization using steepest descent preconditioning with line search (\ref{eq:steepestA}) (N-GMRES-sdls) and N-GMRES optimization using steepest descent preconditioning with predefined step (\ref{eq:steepestB}) (N-GMRES-sd) are significantly faster than stand-alone sdls, in terms of iterations and $f/g$ evaluations, confirming that the N-GMRES acceleration mechanism is effective, and steepest descent is an effective preconditioner for it. As could be expected, the preconditioning line searches of N-GMRES-sdls add significantly to its $f/g$ evaluation cost, and N-GMRES-sd is more effective. N-GMRES accelerates steepest descent up to a point where performance becomes competitive with N-CG and L-BFGS.
It is important to note that convergence profiles like the ones presented in Fig.\ \ref{fig:A} tend to show significant variation depending on the random initial guess. The instances presented are arbitrary and not hand-picked with a special purpose in mind (they simply correspond to seed 0 in our matlab code) and we show them because they do provide interesting illustrations and show patterns that we have verified to be quite general over many random instances. However, they cannot reliably be used to conclude on detailed relative performance of various methods. For this purpose, we provide tables below that compare performance averaged over a set of random trials.

Fig.\ \ref{fig:w} shows the effect of varying the window size $w$ on $|f(\bu_i)-f^*|$ and $\|\bg(\bu_i)\|$ convergence for N-GMRES-sdls and N-GMRES-sd optimization as a function of $f/g$ evaluations, for an instance of Problem A. Window size $w=20$ emerges as a suitable choice if sufficient memory is available, leading to rapid convergence. 
However, window sizes as small as $w=3$ already provide good results, especially for N-GMRES-sd.
This indicates that satisfactory results can be obtained with small windows, which may be useful if memory is scarce.
We use window size $w=20$ for all numerical results in this paper. 

Fig.\ \ref{fig:B} shows results for an instance of Problem B, which is a modification of Problem A introducing more nonlinearity, and Fig.\ \ref{fig:C} shows results for the even more difficult Problem C, with random nonlinear mixing of the coordinate directions. Both figures show that stand-alone sdls is very slow, and confirm that N-GMRES-sdls and N-GMRES-sd significantly speed up steepest descent. For Problem B, N-GMRES-sdls, N-GMRES-sd, N-CG and L-BFGS perform similarly, but for the more difficult Problem C N-GMRES-sdls, N-GMRES-sd and L-BFGS perform much better than N-CG.

%------------------------------------------------------------
\begin{table}[h!]
    \begin{center}
        \begin{tabular}{|l|c|c|c|c|}
\hline
problem  & N-GMRES-sdls & N-GMRES-sd & N-CG & L-BFGS \\
 \hline
%A $n$=100 & 242 & 111 & 84 \\
%A $n$=200 & 406 & 171 & 127 \\
%B $n$=100 & 1200 & 395 & 198 \\
%B $n$=200 & 1338 & 752 & 606 \\
%C $n$=100 & 926(1) & 443 & 13156(7) \\
%C $n$=200 & 1447 & 461 & 26861(9) \\
A $n$=100 & 242 & 111 & 84 & 73 \\
A $n$=200 & 406 & 171 & 127 & 104 \\
B $n$=100 & 1200 & 395 & 198 & 170 \\
B $n$=200 & 1338 & 752 & 606 & 321 \\
C $n$=100 & 926(1) & 443 & 13156(7) & 151 \\
C $n$=200 & 1447 & 461 & 26861(9) & 204 \\ \hline
        \end{tabular}
    \end{center}
    \caption{Average number of $f/g$ evaluations needed to reach $|f(\bu_i)-f^*|<10^{-6}$ for 10 instances of Problems A--C with random initial guess and with different sizes. Numbers in brackets give the number of random trials (out of 10) that did not converge to the required tolerance within 1500 iterations (if any).}
    \label{tab:ABC}
\end{table}
%------------------------------------------------------------
%---------------------------------------------------------------------------------------------------------------------------------
\begin{figure}[!htbp]
  \centering
  \scalebox{0.35}{
  \includegraphics{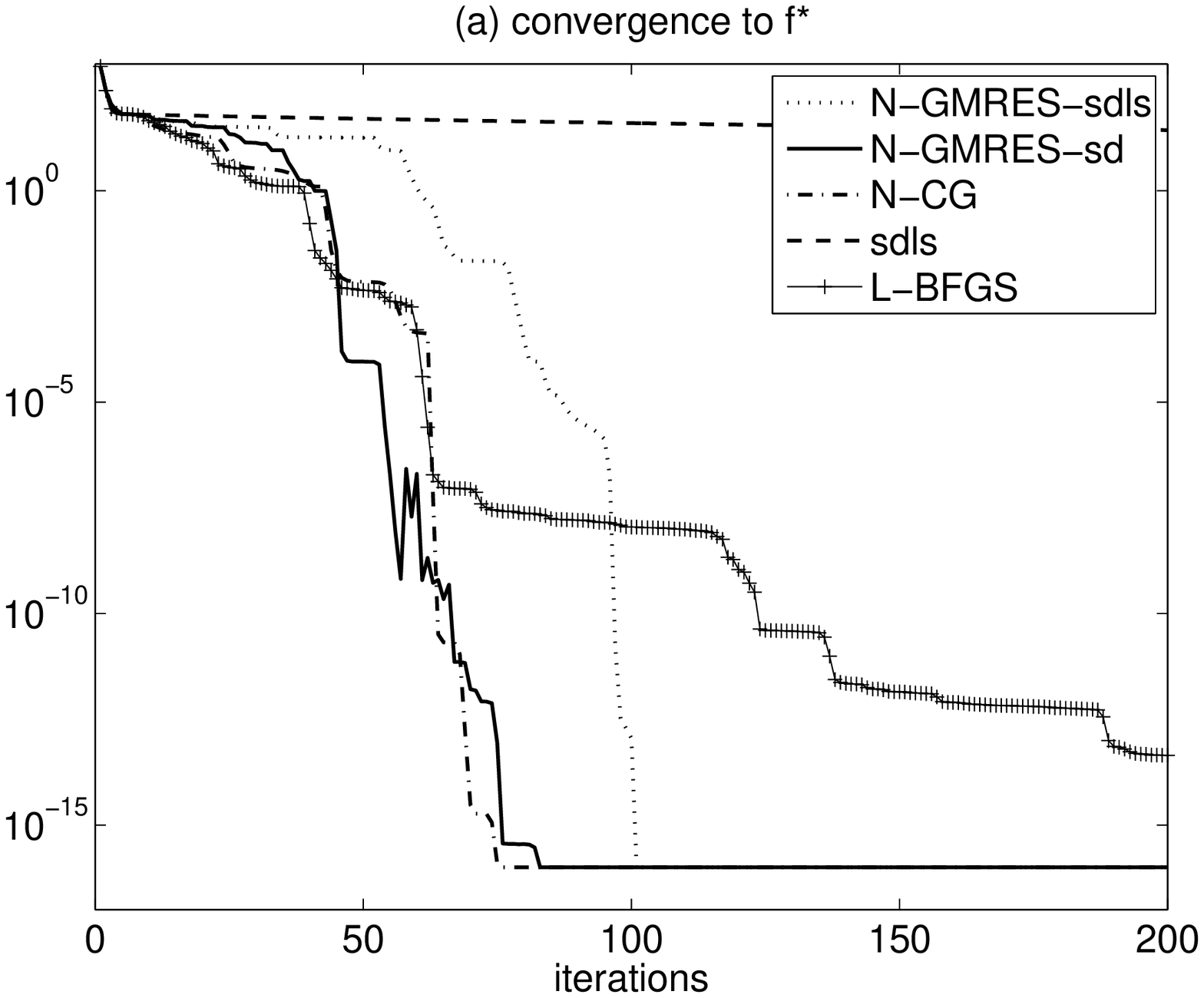}
  \includegraphics{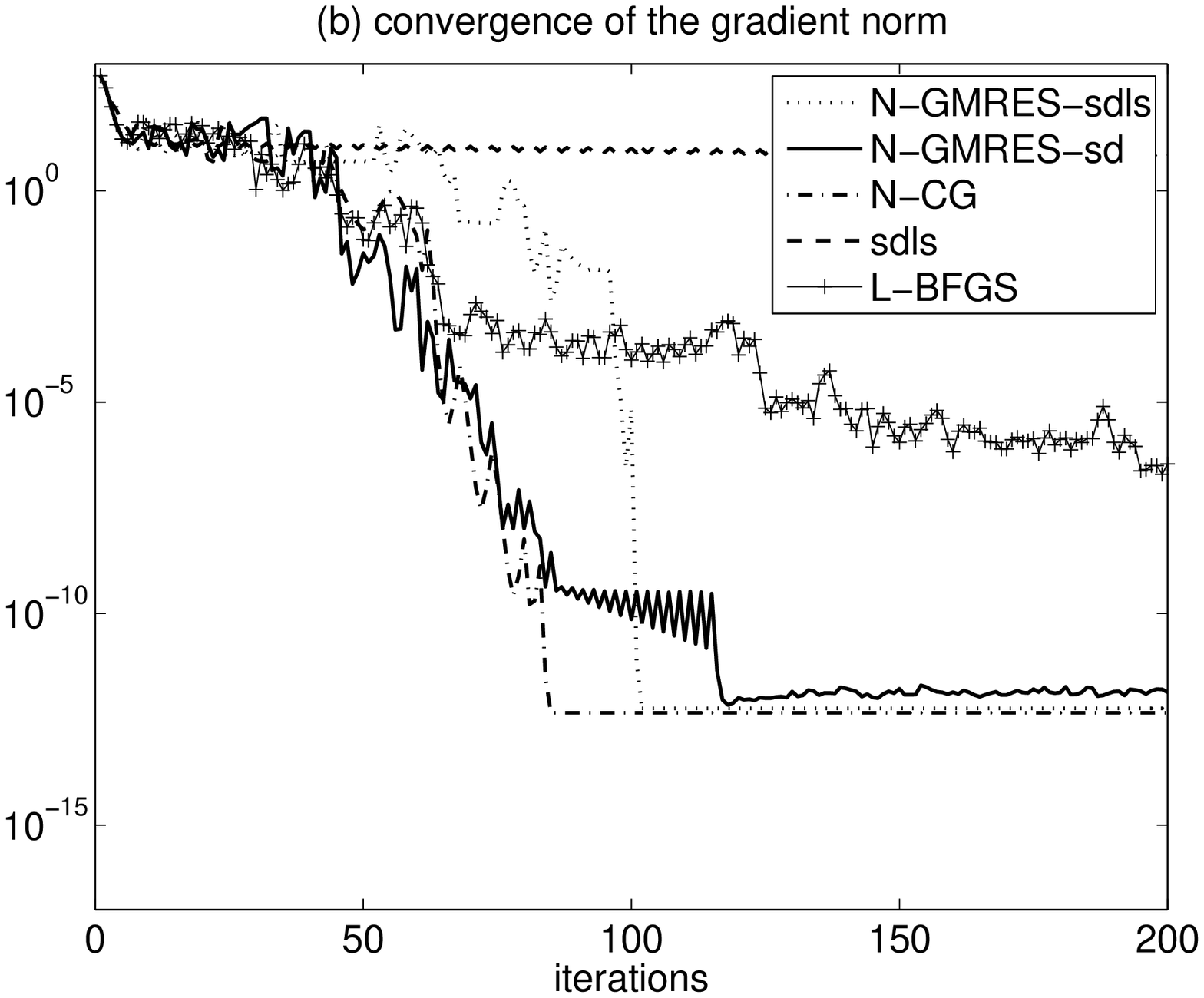}
  }
  \scalebox{0.35}{
  \includegraphics{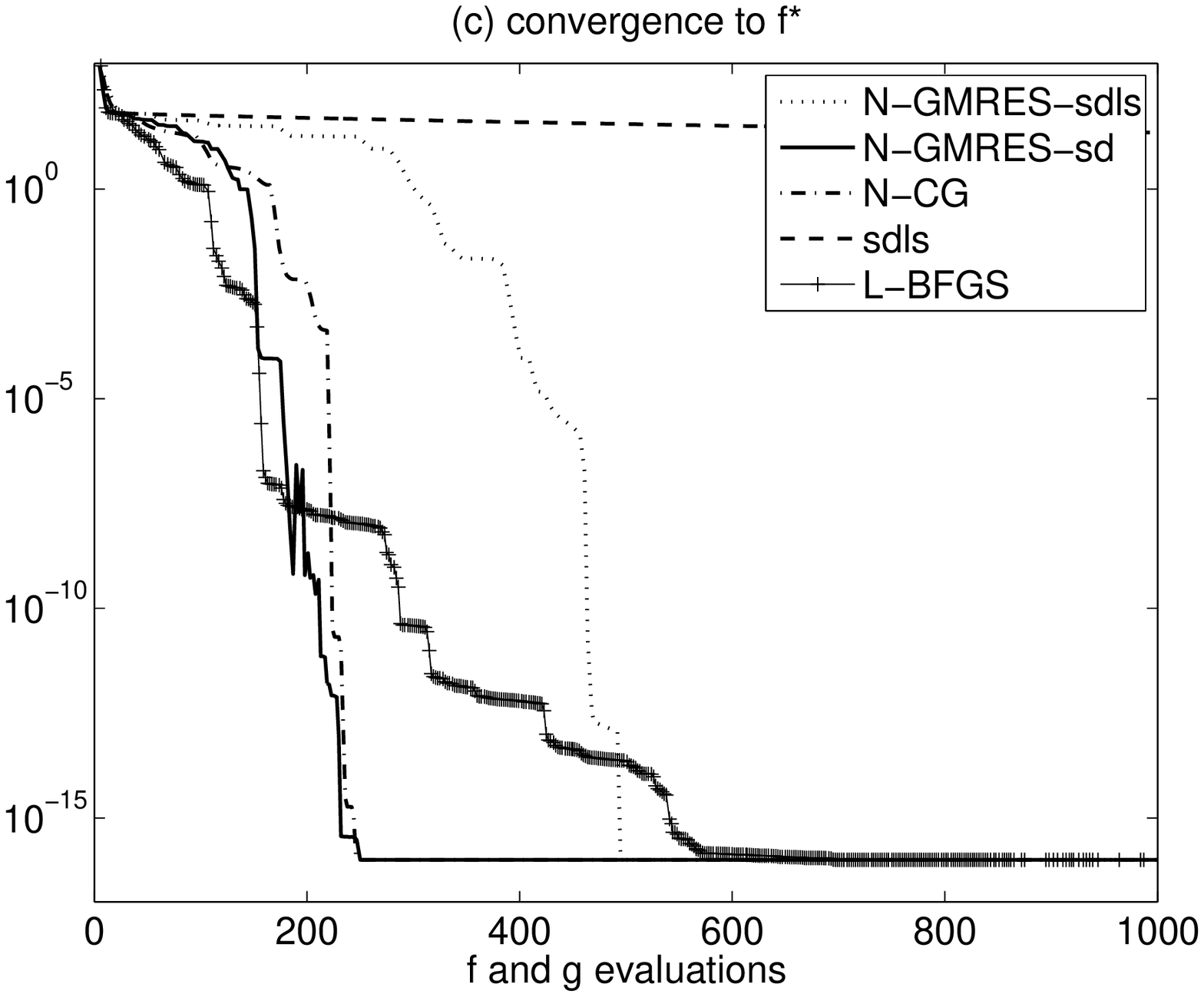}
  \includegraphics{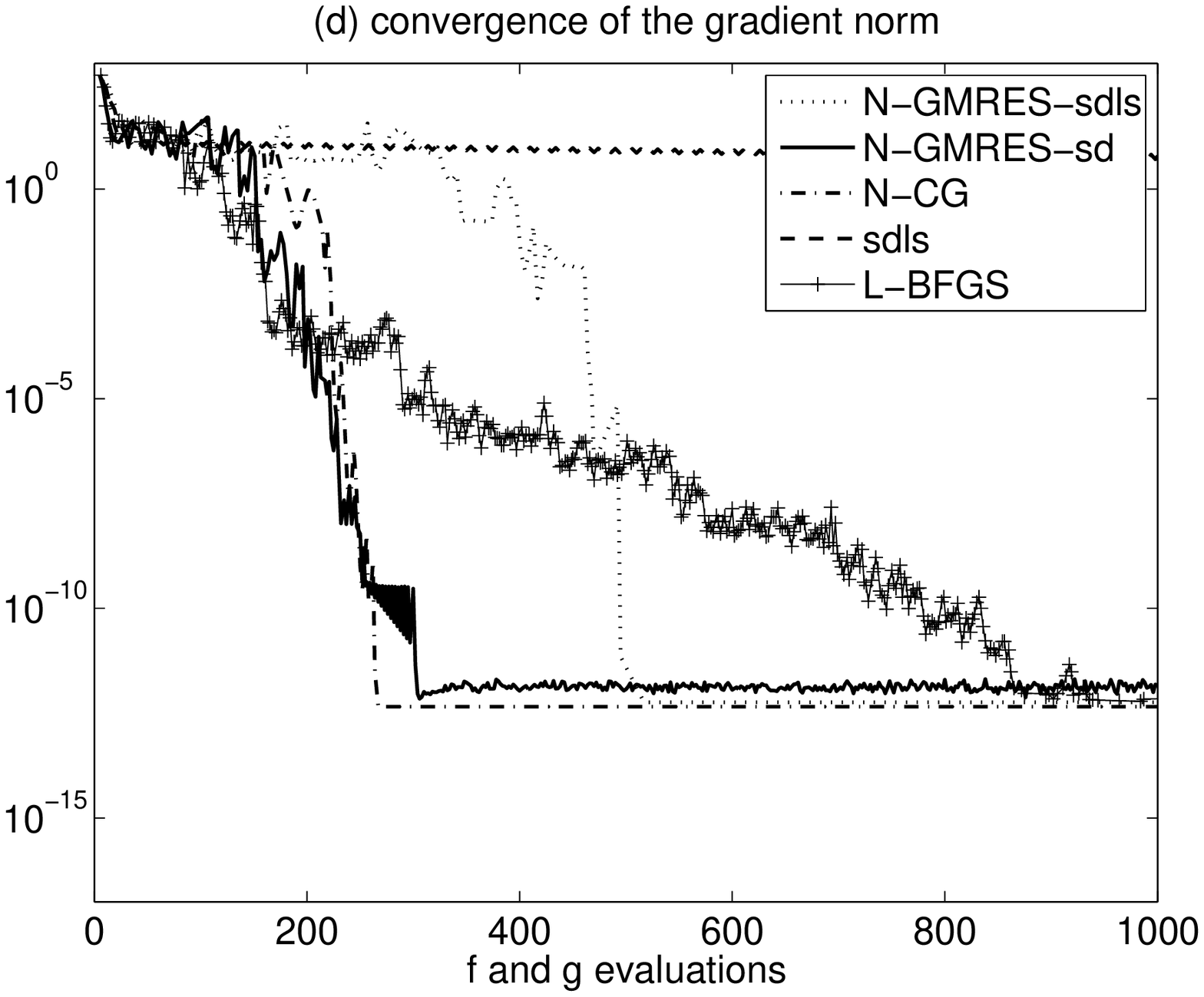}
  }
   \caption{Problem D ($n=1000$). Convergence comparison.}
   \label{fig:D}
\end{figure}    
%---------------------------------------------------------------------------------------------------------------------------------

Table \ref{tab:ABC} confirms the trends that were already present in the specific instances of test problems A--C that were shown in Figures \ref{fig:A}, \ref{fig:B} and \ref{fig:C}. The table gives the average number of $f/g$ evaluations that were needed to reach $|f(\bu_i)-f^*|<10^{-6}$ for 10 random instances of Problems A--C with different sizes. For Problems A and B, N-GMRES-sdls and N-GMRES-sd consistently give $f/g$ evaluation counts that are of the same order of magnitude as N-CG. N-GMRES-sd comes close to being competitive with N-CG. L-BFGS is the fastest method for all problems in Table \ref{tab:ABC}. For the more difficult Problem C, both N-GMRES-sdls, N-GMRES-sd and L-BFGS are significantly faster than N-CG, which appears to have convergence difficulties for this problem. N-GMRES-sd is clearly faster than N-GMRES-sdls for all tests.

%*****************************************************************
\subsection{Numerical Results for Problems D--G}
\label{subsec:DG}
%*****************************************************************
Figure \ref{fig:D} gives convergence plots for a single instance of Problem D. It confirms the observations from Figures \ref{fig:A}, \ref{fig:B} and \ref{fig:C}: for this standard test problem from \cite{MoreTest}, stand-alone sdls again is very slow, and N-GMRES-sdls and N-GMRES-sd significantly speed up steepest descent convergence. N-GMRES-sdls and N-GMRES-sd have iteration and $f/g$ counts that are of the same order of magnitude as N-CG and L-BFGS, and in particular N-GMRES-sd is competitive with N-CG and L-BFGS. Convergence plots for instances of Problems E--G show similar behaviour and are not presented.

%------------------------------------------------------------
\begin{table}[h!]
    \begin{center}
        \begin{tabular}{|l|c|c|c|c|}
\hline
problem  & N-GMRES-sdls & N-GMRES-sd & N-CG & L-BFGS\\
 \hline
D $n$=500 & 525 & 172 & 222 & 166 \\
D $n$=1000 & 445 & 211 & 223 & 170 \\
E $n$=100 & 294 & 259 & 243 & 358 \\
E $n$=200 & 317 & 243 & 240 & 394 \\
F $n$=200 & 140 & 102(1) & 102 & 92 \\
F $n$=500 & 206(1) & 175(1) & 135 & 118 \\
G $n$=100 & 1008(2) & 152 & 181 & 358 \\
G $n$=200 & 629(1) & 181 & 137 & 240 \\
%D $n$=500 & 525 & 172 & 222 \\
%D $n$=1000 & 445 & 211 & 223 \\
%E $n$=100 & 294 & 259 & 243 \\
%E $n$=200 & 317 & 243 & 240 \\
%F $n$=200 & 140 & 102(1) & 102 \\
%F $n$=500 & 206(1) & 175(1) & 135 \\
%G $n$=100 & 1008(2) & 152 & 181 \\
%G $n$=200 & 629(1) & 181 & 137 \\
 \hline
        \end{tabular}
    \end{center}
    \caption{Average number of $f/g$ evaluations needed to reach $|f(\bu_i)-f^*|<10^{-6}$ for 10 instances of Problems D--G with random initial guess and with different sizes. Numbers in brackets give the number of random trials (out of 10) that did not converge to the required tolerance within 500 iterations (if any).}
    \label{tab:DEFG}
\end{table}
%------------------------------------------------------------
Table \ref{tab:DEFG} on $f/g$ evaluation counts for Problems E--G again confirms the trends that were observed before. N-GMRES-sdls and N-GMRES-sd give $f/g$ evaluation counts that are of the same order of magnitude as N-CG and L-BFGS, and N-GMRES-sd in particular is competitive with N-CG and L-BFGS.

%%%%%%%%%%%%%%%%%%%%%%%%%%%%%%%%%%%%%%%%%%%%%%%%%%%%%%%%%%
%%%%%%%%%%%%%%%%%%%%%%%%%%%%%%%%%%%%%%%%%%%%%%%%%%%%%%%%%%
\section{Conclusion}
\label{sec:conc}
%%%%%%%%%%%%%%%%%%%%%%%%%%%%%%%%%%%%%%%%%%%%%%%%%%%%%%%%%%
%%%%%%%%%%%%%%%%%%%%%%%%%%%%%%%%%%%%%%%%%%%%%%%%%%%%%%%%%%
%---------------------------------------------------------------------------------------------------------------------------------
\begin{figure}[!htbp]
  \centering
  \scalebox{0.5}{
  \includegraphics{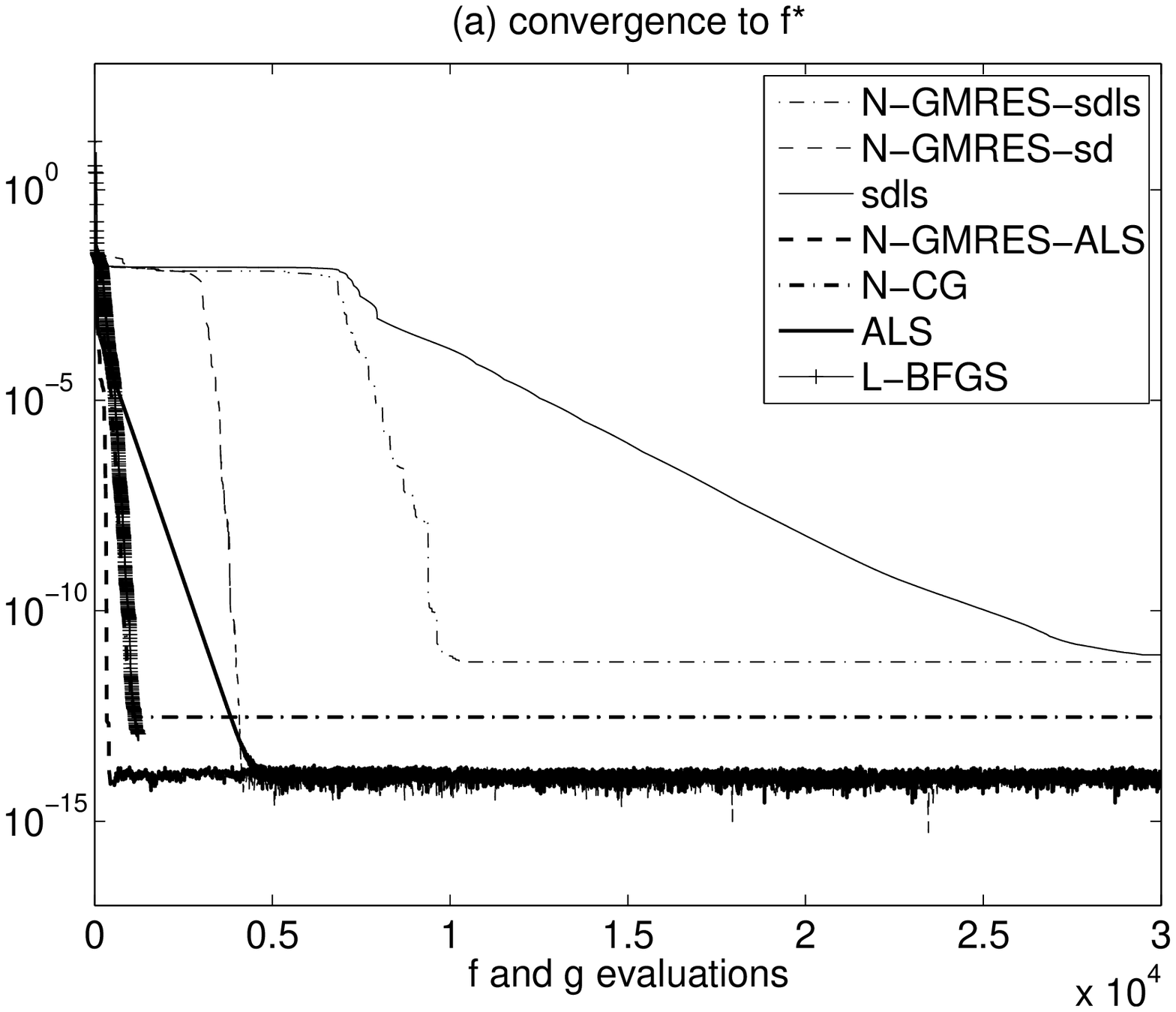}}
  \scalebox{0.5}{  
  \includegraphics{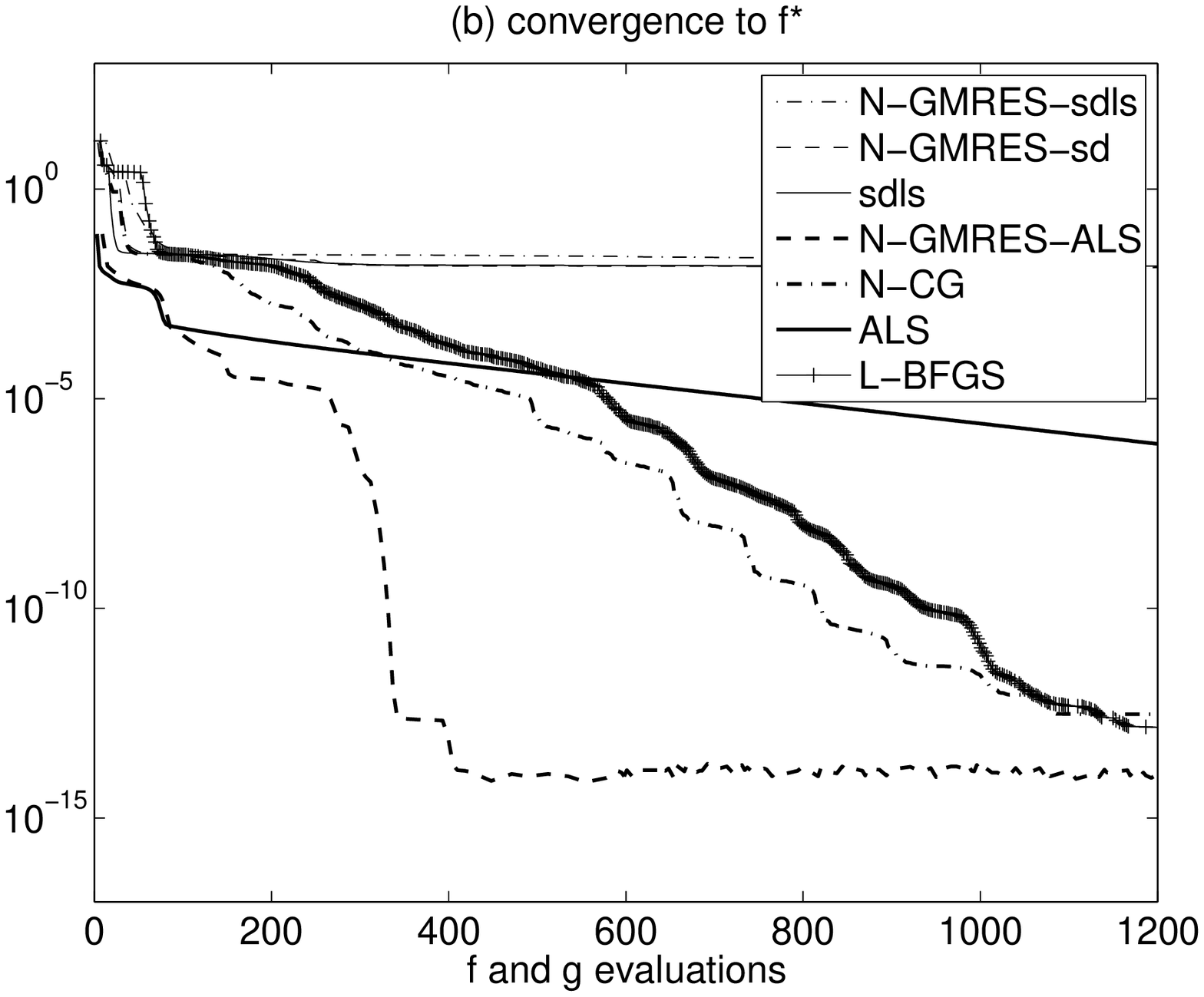}
  }
   \caption{Convergence histories of the 10-logarithm of $|f(\bu_i)-f^*|$ as a function
   of $f/g$ evaluations, for the canonical tensor approximation problem of Figures 1.2 and 1.3 in
   \cite{NGMRES}.
   Panel (a) shows that stand-alone sdls is very slow for this problem, and N-GMRES-sdls and N-GMRES-sd significantly speed up steepest descent. However, for this difficult problem, it is beneficial to use a more powerful nonlinear preconditioner. Using the ALS preconditioner in stand-alone fashion already provides faster convergence than N-GMRES-sdls and N-GMRES-sd. The zoomed view in Panel (b) shows that N-CG and L-BFGS are faster than stand-alone ALS when high accuracy is required, but N-GMRES preconditioned with the powerful ALS preconditioner is the fastest method by far, beating N-CG and L-BFGS by a factor of 2 to 3. This illustrates that the real power of the N-GMRES optimization algorithm may lie in its ability to employ powerful problem-dependent nonlinear preconditioners (ALS in this case).}
   \label{fig:CP}
\end{figure}    
%---------------------------------------------------------------------------------------------------------------------------------

In this paper, we have proposed and studied steepest descent preconditioning as a universal preconditioning approach
for the N-GMRES optimization algorithm that we recently introduced in the context of a canonical tensor approximation problem and ALS preconditioning \cite{NGMRES} (Paper I).
We have considered two steepest descent preconditioning process variants, one with a line search, and the other one with a predefined step length.
The first variant is significant because we showed that it leads to a globally convergent optimization method, but the second variant proved more efficient in numerical tests, with no apparent degradation in convergence robustness.
Numerical tests showed that the two steepest-descent preconditioned N-GMRES methods both speed up stand-alone steepest descent optimization very significantly, and are competitive with standard N-CG and L-BFGS methods, for a variety of test problems.
These results serve to theoretically and numerically establish steepest-descent preconditioned N-GMRES as a general optimization method for unconstrained nonlinear optimization, with performance that appears promising compared to established techniques.

However, we would like to argue that the real potential of the N-GMRES optimization framework lies in the fact
that it can use problem-dependent nonlinear preconditioners that are more powerful than steepest descent.
Preconditioning of N-CG in the form of (linear) variable transformations is an area of active research \cite{HagerPrecond}. However, it is interesting to note that our N-GMRES optimization framework naturally allows for a more general type of preconditioning: any nonlinear optimization process $M(.)$ can potentially be used as a nonlinear preconditioner in the framework, or, equivalently, N-GMRES can be used as a simple wrapper around any other iterative optimization process $M(.)$ to seek acceleration of that process.
This can be illustrated with the following example, in which we first apply N-GMRES with the steepest descent preconditioners proposed in this paper, to a canonical tensor approximation problem from \cite{NGMRES}.
(In particular, we consider the canonical tensor approximation problem of Figures 1.2 and 1.3 in \cite{NGMRES}, in which a rank-three canonical tensor approximation (with 450 variables) is sought for a three-way data tensor of size $50\times50\times50$.) Panel (a) of Fig.\ \ref{fig:CP} shows how stand-alone steepest descent (sdls) is very slow for this problem: it requires more than 30,000 $f/g$ evaluations. (The tensor calculations are performed in matlab using the Tensor Toolbox \cite{KoldaTOOLBOX}. For this problem, we use $\delta=10^{-3}$ in (\ref{eq:steepestB}).) The GMRES-sdls and N-GMRES-sd convergence profiles confirm once more one of the main messages of this paper: steepest-descent preconditioned N-GMRES speeds up stand-alone steepest descent very significantly. However, steepest descent preconditioning (which we have argued is in some sense equivalent to non-preconditioned GMRES for linear systems) is not powerful enough for this difficult problem, and a more advanced preconditioner is required. Indeed, Panel (a) of Fig.\ \ref{fig:CP} shows that the stand-alone ALS process is already more efficient than steepest-descent preconditioned N-GMRES. Panel (b) indicates, however, that N-GMRES preconditioned by ALS is a very effective method for this problem: it speeds up ALS very signficantly, and is much faster than N-CG and L-BFGS, by a factor of 2 to 3. (Panel (b) of Fig.\ \ref{fig:CP} illustrates the findings from extensive tests comparing ALS, N-CG and ALS-preconditioned N-GMRES that were reported in Paper I and \cite{AcarCPOPT}.)

In the case of GMRES for linear systems, non-preconditioned GMRES (or: GMRES with the identity preconditioner) is often just a starting point. For many difficult problems it converges too slowly, and there is a very extensive and ever expanding research literature on developing advanced problem-dependent preconditioners that in many cases speed up convergence very significantly. In the same way, the present paper is likely not more than a starting point in theoretically and numerically establishing the N-GMRES optimization method with general steepest descent preconditioning process. As the results shown in Fig.\ \ref{fig:CP} already indicate, we expect that the real power of the N-GMRES optimization framework will turn out to lie in its ability to use powerful problem-dependent nonlinear preconditioners. This suggests that further exploring N-GMRES optimization with advanced preconditioners may lead to efficient numerical methods for a variety of nonlinear optimization problems.

\section*{Acknowledgments}
This work was sponsored by the Natural Sciences and Engineering Research Council of Canada and by Lawrence Livermore National Laboratory under subcontract B594099. The research was conducted during a sabbatical visit at the Algorithms and Complexity Department of the Max Planck Institute for Informatics in Saarbruecken, whose hospitality is greatly acknowledged.
%%%%%%%%%%%%%%%%%%%%%%%%%%%%%%%%%%%%%%%%%%%%%%%%%%%%%%%%%%
%%%%%%%%%%%%%%%%%%%%%%%%%%%%%%%%%%%%%%%%%%%%%%%%%%%%%%%%%%
%%%%%%%%%%%%%%%%%%%%%%%%%%%%%%%%%%%%%%%%%%%%%%%%%%%%%%%%%%
%%%%%%%%%%%%%%%%%%%%%%%%%%%%%%%%%%%%%%%%%%%%%%%%%%%%%%%%%%
%%%%%%%%%%%%%%%%%%%%%%%%%%%%%%%%%%%%%%%%%%%%%%%%%%%%%%%%%%
%%%%%%%%%%%%%%%%%%%%%%%%%%%%%%%%%%%%%%%%%%%%%%%%%%%%%%%%%%
%%%%%%%%%%%%%%%%%%%%%%%%%%%%%%%%%%%%%%%%%%%%%%%%%%%%%%%%
% Bibliography
%%%%%%%%%%%%%%%%%%%%%%%%%%%%%%%%%%%%%%%%%%%%%%%%%%%%%%%%
%\nocite{*}
%\bibliographystyle{siam}
%\bibliography{lsnonlin}

\begin{thebibliography}{10}

\bibitem{AcarCPOPT}{\sc 
E. Acar, D.M. Dunlavy, and T.G. Kolda}, {\em A Scalable Optimization Approach for Fitting Canonical Tensor Decompositions},  Journal of Chemometrics, 25 (2011), pp. 67--86.

\bibitem{KoldaTOOLBOX}{\sc 
B.W. Bader and T.G. Kolda}, {\em MATLAB Tensor Toolbox Version 2.4}, http://csmr.ca.sandia.gov/~tgkolda/TensorToolbox/, March 2010.

\bibitem{NGMRES} {\sc H. De Sterck}, {\em A Nonlinear GMRES Optimization Algorithm for Canonical Tensor Decomposition}, submitted to SIAM J. Sci. Comp., 2011, arXiv:1105.5331.

\bibitem{POBLANO}{\sc 
D.M. Dunlavy, T.G. Kolda, and E. Acar}, {\em Poblano v1.0: A Matlab Toolbox for Gradient-Based Optimization}, Technical Report SAND2010-1422, Sandia National Laboratories, Albuquerque, NM and Livermore, CA, March 2010.

\bibitem{SaadAnderson}{\sc H. Fang and Y. Saad}, {\em Two classes of multisecant methods for nonlinear acceleration}, Numerical Linear Algebra with Applications, 16 (2009), pp. 197--221.

\bibitem{NocedalNCG}{\sc J.C. Gilbert and J. Nocedal}, {\em Global Convergence Properties of Conjugate Gradient Methods for Optimization}, SIAM J. Optim., 2 (1992), pp. 21--42.

\bibitem{HagerPrecond}{\sc W.W. Hager and H. Zhang}, {\em A Survey of Nonlinear Conjugate Gradient Methods}, Pacific Journal of Optimization, 2 (2006), pp. 35--58.

\bibitem{MoreThuente}{\sc 
J.J. Mor\'{e} and D.J. Thuente}, {\em Line search algorithms with guaranteed sufficient decrease}, ACM Transactions on Mathematical Software, 20 (1994), pp. 286--307.

\bibitem{MoreTest}{\sc J.J. Mor\'{e}, B.S. Garbow, and K.E. Hillstrom}, {\em Testing Unconstrained Optimization Software},
 ACM Trans. Math. Softw., 7 (1981), pp. 17--41.
 
\bibitem{Nocedal}
{\sc J. Nocedal and S.J. Wright}, {\em Numerical optimization}, Second Edition, Springer, Berlin, 2006.

\bibitem{OosterleeNGMRES-ETNA}{\sc C.W. Oosterlee}, {\em On multigrid for linear complementarity problems with application to American-style options}, 
Electronic Transactions on Numerical Analysis, 15 (2003), pp. 165--185.

\bibitem{OosterleeNGMRES-SISC}{\sc C.W. Oosterlee and T. Washio}, {\em Krylov Subspace Acceleration of Nonlinear Multigrid with Application to Recirculating Flows}, SIAM J. Sci. Comput., 21 (2000), pp. 1670--1690.

\bibitem{SaadFlexible}{\sc Y. Saad}, {\em A flexible inner-outer preconditioned GMRES algorithm},
SIAM J. Sci. Comp., 14 (1993), pp. 461--469. 

\bibitem{SaadBook}{\sc Y. Saad}, {\em Iterative Methods for Sparse Linear Systems}, Second Edition, SIAM, Philadelphia, 2003.

\bibitem{SaadGMRES}{\sc Y. Saad and M.H. Schultz}, {\em GMRES: A generalized minimal residual algorithm for solving nonsymmetric linear systems}, SIAM J. Sci. Comp., 7 (1986), pp. 856--869.

\bibitem{RRE}{\sc D.A. Smith, W.F. Ford, and A. Sidi}, {\em Extrapolation methods for vector sequences}, SIAM Rev., 29 (1987), pp. 199--234.

\bibitem{Walker}{\sc H. Walker and P. Ni}, {\em Anderson acceleration for fixed-point iterations}, to appear in SIAM J. Numer. Anal (2011).

\bibitem{WashioNGMRES-ETNA}{\sc 
T. Washio and C.W. Oosterlee}, {\em Krylov subspace acceleration for nonlinear multigrid schemes}, 
Electronic Transactions on Numerical Analysis,  6 (1997), pp. 271--290.

%\bibitem{CANDECOMP}{\sc 
%J.D. Carroll and J.J. Chang}, {\em Analysis of individual differences in multidimensional scaling via an N-way generalization of ÒEckart-YoungÓ decomposition}, Psychometrika, 35 (1970), pp. 283--319.

%\bibitem{LathauwerSimul}{\sc 
%L. De Lathauwer}, {\em A link between the canonical decomposition in multilinear algebra and simultaneous matrix diagonalization}, SIAM Journal on Matrix Analysis and Applications, 28 (2006), pp. 642--666.

%\bibitem{LathauwerSchur}{\sc 
%L. De Lathauwer, B. De Moor, and J. Vandewalle}, {\em Computation of the canonical decomposition by means of a simultaneous generalized Schur decomposition}, SIAM Journal on Matrix Analysis and Applications, 26 (2004), pp. 295--327.

%\bibitem{amg-theory}{\sc A.~Brandt}, {\em Algebraic multigrid theory: 
%The symmetric case}, Appl. Math. Comp. 19:23-56, 1986.
%\bibitem{robustAMG} {\sc A. Cleary, R. Falgout, V. Henson, J. Jones, T. Manteuffel, S. McCormick, G. Miranda, and J. Ruge}, {\em Robustness and algorithmic scalability of algebraic multigrid (AMG)}, SIAM J. Sci. Comp. 21:1886-1908, 2000.
%----------------------------------
%\bibitem{PARAFAC}{\sc 
%R.A. Harshman}, {\em Foundations of the PARAFAC procedure: Models and conditions for an ÒexplanatoryÓ multi-modal factor analysis}, UCLA working papers in phonetics, 16 (1970), pp. 1--84.

%\bibitem{Ishteva}{\sc 
%M. Ishteva, L. De Lathauwer, P. Absil, and S. Van Huffel}, {\em Differential-geometric Newton method for the best rank-($R_1$
%,$R_2$,$R_3$) approximation of tensors}, Numerical Algorithms, 51 (2009), pp. 179--194.

%\bibitem{ComonLS}{\sc 
%M. Rajih, P. Comon, and R.A. Harshman}, {\em Enhanced line search: A novel method to accelerate {PARAFAC}}, SIAM Journal on Matrix Analysis and Applications, 30 (2008), pp. 1128--1147.

%\bibitem{Savas}{\sc 
%B. Savas and L. Eld\'{e}n}, {\em Krylov-Type Methods for Tensor Computations}, submitted (2010), arXiv:1005.0683v2.

%\bibitem{TomasiPARAFAC}{\sc
%G. Tomasi and R. Bro}, {\em A comparison of algorithms for fitting the PARAFAC model}, Computational Statistics and Data Analysis, 50 (2006), pp. 1700--1734.

%\bibitem{KoldaSIREV}{\sc 
%T.G. Kolda and B.W. Bader}, {\em Tensor Decompositions and Applications}, SIAM Review, 51 (2009), pp. 455--500.

%-------------------------


\end{thebibliography}
%\clearpage

%%%%%%%%%%%%%%%%%%%%%%%%%%%%%%%%%%%%%%%%%%%
%%%%%%%%%%%%%%%%%%%%%%%%%%%%%%%%%%%%%%%%%%%
%%%%%%%%%%%%%%%%%%%%%%%%%%%%%%%%%%%%%%%%%%%

\end{document}